\newtheorem{theo}{Theorem}[section]
\newtheorem{prop}[theo]{Proposition}
\newtheorem{lemm}[theo]{Lemma}
\newtheorem{coro}[theo]{Corollary}
\theoremstyle{definition}
\newtheorem{defi}[theo]{Definition}
\newtheorem{exam}[theo]{Example}
\newcommand{\kk}[1]{\mathbb{#1}}
\newcommand{\cc}[1]{\mathcal{#1}}
\def\^#1{^{[#1]}}
\def\ov#1{\overline{#1}}
\DeclareMathOperator{\Span}{span}
\DeclareMathOperator{\Ric}{Ric}
\DeclareMathOperator{\Gr}{Gr}
\DeclareMathOperator{\im}{Im}
\DeclareMathOperator{\Ker}{Ker}
\DeclareMathOperator{\End}{End}
\DeclareMathOperator{\Hom}{Hom}
\DeclareMathOperator{\id}{id}
\DeclareMathOperator{\tr}{tr}
\def\snd{\sigma}
\author{Gunnar Þór Magnússon}
\address{Hafnarfjörður, Iceland}
\email{gunnar@magnusson.io}
\date{\today}
\title[Degenerate Hermitian geometry]{Degenerate Hermitian geometry and\\
curvatures of holomorphic fibrations}
\begin{document}

\begin{abstract}
We calculate curvature tensors of metrics on the total spaces of holomorphic fibrations. Our main tool is a theory of Chern connections and curvature forms for possibly degenerate Hermitian forms on holomorphic vector bundles. We prove a version of the Codazzi--Griffiths equations for curvatures of sub- and quotient bundles in that setting and apply them to the study of honest Hermitian metrics on fibrations. We apply this to calculate the curvature of a metric on Grassmannian bundles and prove they have positive holomorphic sectional curvature if the base does.
\end{abstract}

\maketitle

\section*{Introduction}

Let $\pi : X \to S$ be a family of complex manifolds, and consider the associated short exact sequence
\[
\begin{tikzcd}
0 \ar[r] & T_{X/S} \ar[r] & T_X \ar[r] & \pi^*T_S \ar[r] & 0.
\end{tikzcd}
\]
We often find ourselves in the situation of having a metric $h_S$ on the base $S$ and a Hermitian form $h_{X/S}$ on the $X$ that is positive-definite on the relative tangent bundle $T_{X/S}$, and then construct a metric on $X$ as the sum $h_X := h_{X/S} + \pi^*h_S$. This happens when we consider a projective or Grassmannian bundle associated to a vector bundle~\cite{alvarez2016positive,alvarez2018projectivized,yang2019hirzebruch}; the blow-up of a point like in the original proof of the Kodaira embedding theorem~\cite{kodaira-embedding}; metrics on holomorphic fibrations~\cite{calabi-fibres-holomorphes}; or metrics on versal families of deformations~\cite{magnusson2012natural}.

In these situations, the metric on the base has a curvature tensor, and the relative form often has something that resembles such a tensor. As neither form is positive-definite on $X$, we can't use our usual tools to calculate a curvature tensor for $h_X$ from this information. Instead, we are often reduced to committing difficult calculations in local coordinates; see for example Chaturvedi and Heier~\cite[Theorem~1.1]{chaturvedi2020hermitian}.

If we had the equivalent of Chern connections and curvature forms for arbitrary Hermitian forms, we could try to prove a version of the Codazzi--Griffiths equations~\cite{griffiths1965hermitian} that relate the curvature form of sub- and quotient bundles to that of the ambient bundle. In the positive-definite case, those equations can be used to calculate the curvature tensor of the sum $h_1 + h_2$ of two metrics on a given bundle $E$ by considering the short exact sequence
\[
\begin{tikzcd}
0 \ar[r] & E \ar[r] & E\oplus E \ar[r] &  E \ar[r] & 0,
\end{tikzcd}
\]
where the metrics on the bundles are (in order) $h_1 + h_2$, $h_1 \oplus h_2$, and the induced metric on the ``quotient''; see \cite[Chapter~7, Exercise~6]{zheng2000complex}.

In this note, we fill in the details of this sketch. In section~\ref{sec:degenerate-chern-connections} we show that a holomorphic vector bundle $E \to X$ with a smooth Hermitian form $b$ admits a Chern connection. Such a connection is not unique when the form is degenerate, but we can always get rid of the ambiguity between different choices of connections by applying the form $b$, as the difference of two such connections must take values in its kernel. This yields a unique curvature \emph{tensor} for such a form. We then prove the Codazzi--Griffiths equations hold in this setting and calculate what they imply for sums of Hermitian forms.

As an application, in section~\ref{sec:grassmannian-bundles} we first recover a result of Chaturvedi and Heier~\cite{chaturvedi2020hermitian}, and then dot some i's and cross some t's from works of \'Alvarez~\cite{alvarez2016positive} and \'Alvarez, Heier and Zheng~\cite{alvarez2018projectivized} on the curvature of metrics on projectivized vector bundles (see also Yang and Zheng~\cite{yang2019hirzebruch}). They prove that such bundles admit metrics of positive holomorphic sectional curvature, and finish by saying the same arguments can prove that so do Grassmannian bundles. We treat the slightly more general case of Grassmannian bundles without mention of an underlying vector bundle, and as a corollary conclude that bundles of flag manifolds admit metrics of positive holomorphic sectional curvature once the base manifold does.
Along with our insistence on coordinate-invariant calculations, these are the new results of our~note.%


To do this, we require a degenerate analogue of many statements about adjoints, orthogonal subspaces and induced inner products that are classical in the positive-definite case. We do not know a reference for these results, which would make for fun exercises in a linear algebra course, and thus provide statements and proofs. We do this in section~\ref{sec:degenerate-linear-algebra} as we need to refer to those results for our work on connections and curvature.

A seeming novelty of our approach is that we prove all our results using the connection formulation of complex differential geometry (versus the local coordinate or moving frame formulations). While this connection approach is common in modern Riemannian geometry, complex geometers seem to prefer the older, index-heavy, ways. We hope this note also serves as an advertisement for the merits of the connection approach.

There is some prior art on degenerate metrics in Riemannian geometry; see for example \cite{bel1975degenerate,stoica2011cartan,stoica2014singular}.
The very quick version seems to be that one can define a Levi-Civita connection for such an object and thus a curvature tensor, but neither will generally be unique.
This nicely mirrors what we find here.

We finish this introduction by noting that we think the technical results developed here on general Hermitian forms are only interesting when applied to the study of honest Hermitian metrics, and do not have merit on their own.
Some examples may explain this opinion:

\begin{exam}
More degenerate the form is, the less information the connection gives:
Let $E \to X$ be a holomorphic vector bundle, let $b = 0$, and let $D$ be any connection whose $(0,1)$-part is $\bar\partial$. Then $D$ is a Chern connection of $b$.
\end{exam}

\begin{exam}
Let $X$ be a compact complex manifold of dimension $n$
that admits a nowhere-zero holomorphic one-form $\alpha$.
It  gives us a smooth Hermitian form $b = i \alpha \wedge \ov \alpha$ whose kernel forms a holomorphic subbundle of rank $n-1$ and whose quotient is the trivial line bundle.
Any of our Chern connections for this form will yield a curvature form whose top exterior power vanishes; so if we were to develop Chern--Weil theory in this setting we should expect to conclude the top Chern class of the manifold is zero.
However, a much easier way to conclude the same is to use the additivity property of total Chern classes on the short exact sequence we began with.
\end{exam}

\begin{exam}
Even when a form is non-degenerate, having a Chern connection and curvature form just doesn't let us say very much:
Let $\pi : X \to S$ be a family of compact complex K\"ahler manifolds of dimension $2n$ over a smooth base $S$.
Let $E \to S$ be the holomorphic vector bundle whose sheaf of sections is $\mathcal R^{n}\pi_{*}\mathbb C \otimes \mathcal O_{S}$ and whose fiber over a point $s$ is $E_{s} = H^{n}(X_{s}, \mathbb C)$.
This vector bundle admits a non-degenerate Hermitian form given by the cup product, and the flat Gauss--Manin connection is the Chern connection of this Hermitian form.
\end{exam}

Also note that in the non-degenerate case the most useful results are found when the form is positive-definite. For example the Laplacian of a non-degenerate form of mixed signature will not be strongly elliptic,\footnote{For example, on the torus $\kk C^2 / \kk Z^4$ with the form $b = \smash{\frac i2} dz_1 \wedge d \bar z_1 - \smash{\frac i2} dz_2 \wedge d\bar z_2$ the kernel of the Laplacian is infinite-dimensional.} so the Hodge isomorphism theorem fails, and with it go most of the tools and techniques developed over the last hundred years or so.

We'll finish with one final example we don't have any applications in mind for, but find amusing in its inaccessibility.

\begin{exam}
Let $(E,h) \to X$ be a holomorphic vector bundle with a Hermitian metric $h$. The curvature tensor of the metric,
\[
  R(\alpha, \ov\beta, s, \ov t)
  = h(\tfrac i2 \Theta_{\alpha, \ov\beta}s, \ov t),
\]
defines a Hermitian form on $E \otimes T_{X}$ that is usually not positive-definite. Good luck saying anything about its curvature tensor.
\end{exam}

\section{Linear algebra}
\label{sec:degenerate-linear-algebra}

All vector spaces are finite-dimensional and over the complex numbers. Let $V$ be a vector space and let $b$ be a Hermitian form on $V$. We want to study the geometry of the pair $(V,b)$.\footnote{Everything we say also applies to symmetric bilinear forms on finite-dimensional vector spaces over arbitrary fields. The reader who is interested in those can mentally erase all the conjugations of vectors or spaces.}
Our objective is to establish a version of the Codazzi--Griffiths equations for sub- and quotient bundles of a holomorphic vector bundle, so the main questions we want to answer here are:
\begin{itemize}
\item
When does an adjoint of a linear morphism exist?

\item
Does a Hermitian form on $V$ induce Hermitian forms on all spaces derived from its tensor algebra?

\item
Given a short exact sequence $0 \to S \to V \to Q \to 0$ and a Hermitian form on $V$, do we get Hermitian forms on $S$ and $Q$?
\end{itemize}

\begin{defi}
A \emph{Hermitian space} is a finite-dimensional vector space $V$ with a Hermitian form $b$.
A \emph{morphism} of Hermitian spaces is a linear morphism $f : V \to W$ such that
\[
b_W(fx, \ov{fy}) = b_V(x, \ov y)
\]
for all $x, y \in V$.
\end{defi}

The category this forms is the one that has Hermitian spaces for objects and the above morphisms between them. Given two objects in this category, there is not necessarily any morphism between them; if $V$ has a non-degenerate form while $W$ has the zero form, there is no morphism $V \to W$.

A Hermitian form can be seen as a linear morphism $b : V \to \ov V^*$. As such, it has a kernel. We clearly have
\[
\Ker b = \{ x \in V \mid b(x, \ov y) = 0 \text{ for all $y \in V$}\}.
\]

The results we want to prove are all known for spaces with a non-degenerate form. Our main technical tool will be to reduce to that case by quotienting out the kernel of the form we have.

\begin{prop}
Let $V$ be a vector space. For every Hermitian form $b$ on $V$, there exists a unique non-degenerate Hermitian form $\hat b$ on $V / \Ker V$ such that $q : V \to V/\Ker b$ is a Hermitian morphism.
\end{prop}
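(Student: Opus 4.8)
The plan is to descend $b$ along the quotient map and then verify the three required properties. Writing $K = \Ker b$ and $[x] = q(x)$ for the class of $x$ in $V/K$, the natural candidate is
\[
\hat b([x], \ov{[y]}) := b(x, \ov y), \qquad x, y \in V.
\]
With this definition the morphism condition $\hat b(qx, \ov{qy}) = b(x, \ov y)$ holds tautologically, so the substance of the argument is to check that the formula is well-defined and that the resulting form is Hermitian and non-degenerate; uniqueness will then come essentially for free.

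First I would establish well-definedness, which is the one step I expect to require genuine care. If $[x] = [x']$, then $x - x' \in K$, so $b(x - x', \ov y) = 0$ and hence $b(x, \ov y) = b(x', \ov y)$; this settles independence from the representative in the first slot. For the second slot I would invoke Hermitian symmetry $b(x, \ov y) = \ov{b(y, \ov x)}$, which shows that the ``right kernel'' $\{y : b(x,\ov y) = 0 \text{ for all } x\}$ coincides with $K$, so the same cancellation applies. This symmetry of the kernel is the only point that truly uses that $b$ is Hermitian rather than merely sesquilinear, and it is what makes the descent legitimate in both arguments at once.

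Once $\hat b$ is well-defined, sesquilinearity and Hermitian symmetry descend immediately from those of $b$ because $q$ is surjective, so every pair of classes lifts to a pair of vectors on which the identities already hold. For non-degeneracy I would take $[x] \in \Ker \hat b$, so that $\hat b([x], \ov{[y]}) = 0$ for every $[y] \in V/K$; surjectivity of $q$ converts this into $b(x, \ov y) = 0$ for all $y \in V$, which says exactly that $x \in K$, i.e. $[x] = 0$.

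Finally, uniqueness follows from the same surjectivity observation: any Hermitian form making $q$ a morphism must satisfy $\hat b(qx, \ov{qy}) = b(x, \ov y)$, and since every pair of vectors in $V/K$ is of the form $(qx, qy)$, the values of $\hat b$ are pinned down completely by $b$. I would note that apart from the kernel-symmetry check, the entire argument is the standard quotient-by-the-kernel construction, now carried out for a possibly degenerate Hermitian form.
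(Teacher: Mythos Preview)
Your proof is correct and follows the same route as the paper's: define $\hat b$ by descent along $q$, check well-definedness via the kernel, verify non-degeneracy by lifting, and deduce uniqueness from surjectivity of $q$. If anything you are slightly more careful than the paper, which glosses over the second-slot well-definedness and the inheritance of Hermitian symmetry.
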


\begin{proof}
We define
\[
\hat b(qx, \ov{qy})
= b(x, \ov y).
\]
This is well-defined, as any other element in $qx$ differs from the one we chose by an element of $\Ker q = \Ker b$. The form $\hat b$ is also non-degenerate, as if $qx \in \Ker \hat b$ then $x \in \Ker b$, so $qx = 0$. The form is also defined in such a way that $q$ is a morphism of Hermitian spaces.

If $b'$ is another non-degenerate Hermitian form on $V / \Ker b$ that makes $q$ into a Hermitian morphism, then
\[
b'(qx, \ov{qy})
= b(x, \ov{y})
= b(qx, \ov{qy})
\]
for all $qy \in V / \Ker b$. Thus $b(qx) = b'(qx)$ as $(0,1)$-forms for all $qx \in V / \Ker b$, so $b = b'$.
\end{proof}

If this operation should be called anything, it is clearly a \emph{purge} of the original space, as it removes all the degeneracy from the Hermitian form.

\subsection*{Existence of adjoints}
\label{sec:existence-adjoints}

\begin{defi}
Let $(V, b_V)$ and $(W, b_W)$ be Hermitian spaces, and let $f : V \to W$ be a linear morphism. An \emph{adjoint} of $f$ is a linear morphism $f^\dagger : W \to V$ such that
\[
b_V(f^\dagger x, \ov y)
= b_W(x, \ov{f y})
\]
for all $x \in W$ and $y \in V$.
\end{defi}

If $b_V$ is non-degenerate, every morphism $f : V \to W$ admits a unique adjoint. It is defined by
\[
f^\dagger = b_V^{-1} \circ \ov f^* \circ b_W,
\]
where $\ov f^*$ is the conjugate dual morphism $f^* : \ov W^* \to \ov V^*$ that $f$ induces. In general adjoints may not exist.

\begin{theo}
Let $f : V \to W$ be a linear morphism. The morphism $f$ admits an adjoint if and only if $f(\Ker b_V) \subset \Ker b_W$. If the set of adjoints of $f$ is not empty, it is a $\Hom(W, \Ker b_V)$ torsor.
\end{theo}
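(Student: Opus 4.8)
The statement packages three assertions, and the plan is to take them in increasing order of difficulty, beginning with the torsor claim, which is essentially formal. Assume an adjoint $f^\dagger$ exists and let $g : W \to V$ be any linear map. Then $g$ is an adjoint if and only if $b_V(gx, \ov y) = b_W(x, \ov{fy})$ for all $x \in W$ and $y \in V$; subtracting the identity $b_V(f^\dagger x, \ov y) = b_W(x, \ov{fy})$ shows this is equivalent to $b_V((g - f^\dagger)x, \ov y) = 0$ for all $x, y$, i.e.\ to $(g - f^\dagger)x \in \Ker b_V$ for every $x$. Hence the set of adjoints equals $f^\dagger + \Hom(W, \Ker b_V)$, on which $\Hom(W, \Ker b_V)$ acts freely and transitively by addition.

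For the forward implication I would compute directly. If $f^\dagger$ is an adjoint and $y \in \Ker b_V$, then Hermitian symmetry gives $b_W(x, \ov{fy}) = b_V(f^\dagger x, \ov y) = 0$ for every $x \in W$, and applying symmetry once more yields $b_W(fy, \ov x) = 0$ for all $x$; thus $fy \in \Ker b_W$, so $f(\Ker b_V) \subset \Ker b_W$.

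The content is the converse, and here I would reduce to the non-degenerate case using the purge of the preceding proposition. Assume $f(\Ker b_V) \subset \Ker b_W$, and write $q_V : V \to \hat V$ and $q_W : W \to \hat W$ for the purges of $V$ and $W$, with their induced non-degenerate forms $\hat b_V, \hat b_W$. The hypothesis on kernels is exactly what makes $q_W \circ f$ vanish on $\Ker b_V = \Ker q_V$, so it descends to a linear map $\hat f : \hat V \to \hat W$ with $\hat f \circ q_V = q_W \circ f$. Because $\hat b_V$ is non-degenerate, $\hat f$ has a unique adjoint $\hat f^\dagger : \hat W \to \hat V$. I would then choose any linear splitting $s : \hat V \to V$ of $q_V$ and set $f^\dagger := s \circ \hat f^\dagger \circ q_W$, so that $q_V \circ f^\dagger = \hat f^\dagger \circ q_W$.

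It remains to verify that this $f^\dagger$ is an adjoint, and this is the step I expect to carry the weight, although once the diagram is in place it is a formal chain. Using the purge identity $b_V(v, \ov{v'}) = \hat b_V(q_V v, \ov{q_V v'})$ together with its analogue on $W$, the relation $q_V f^\dagger = \hat f^\dagger q_W$, and the adjoint property of $\hat f^\dagger$, one computes $b_V(f^\dagger x, \ov y) = \hat b_V(\hat f^\dagger q_W x, \ov{q_V y}) = \hat b_W(q_W x, \ov{\hat f\, q_V y}) = b_W(x, \ov{fy})$ for all $x, y$. The real work is thus conceptual rather than computational: recognising that the kernel condition is precisely the obstruction to descending $f$ to the purges, and that a splitting is needed to lift the non-degenerate adjoint back to $W \to V$. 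As a consistency check, a different choice of splitting $s$ alters $f^\dagger$ by a map into $\Ker b_V$, matching the torsor structure found above.
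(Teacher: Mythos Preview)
Your proof is correct and follows essentially the same route as the paper: the torsor statement and the forward implication are argued identically, and for the converse you both descend $f$ to the purged spaces, take the unique adjoint there, and lift via a chosen splitting of $q_V$. You are slightly more careful than the paper in two places---tracking Hermitian symmetry when showing $fy \in \Ker b_W$, and explicitly verifying that the lifted map satisfies the adjoint identity---but these are elaborations of the same argument rather than a different approach.
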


\begin{proof}
If $f$ admits an adjoint $f^\dagger$, then
\[
b_V(f^\dagger x, \ov y)
= b_W(x, \ov{fy})
\]
for all $x \in W$ and $y \in V$. Taking $y \in \Ker b$, we see that $b_W(x, \ov{fy}) = 0$ for all $x \in W$, so $f(y) \in \Ker b_W$.

Suppose then that $f(\Ker b_V) \subset \Ker b_W$. Then $f$ defines a linear morphism $\hat f : V / \Ker b_V \to W / \Ker b_W$. As the induced forms on those spaces are non-degenerate, we get a unique adjoint $\hat f^\dagger : W / \Ker b_W \to V / \Ker b_V$. It induces a morphism $W \to V / \Ker b_V$. Picking a lift $V / \Ker b_V \to V$ we then get an adjoint $f^\dagger$ of $f$.

If $g$ and $h$ are adjoints of $f^\dagger$, then
\[
b_V((g - h)x, \ov y)
= b_V(gx, \ov y) - b_V(hx, \ov y)
= b_W(x, \ov{fy}) - b_W(x, \ov{fy})
= 0
\]
for all $x \in W$ and $y \in V$, so the image of $g - h$ is contained in $\Ker b_V$.
\end{proof}

\begin{coro}
The subset of $\Hom(V,W)$ of morphisms that have adjoints is a linear subspace
of codimension $\dim \Ker b_V (\dim W - \dim \Ker b_W)$.
\end{coro}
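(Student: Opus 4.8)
The plan is to read the defining condition straight off the preceding theorem and then reduce the codimension count to an elementary computation with spaces of homomorphisms. By the theorem, a morphism $f : V \to W$ admits an adjoint precisely when $f(\Ker b_V) \subset \Ker b_W$, so the set in question is
\[
A = \{ f \in \Hom(V, W) \mid f(\Ker b_V) \subset \Ker b_W \}.
\]
First I would observe that $A$ is a linear subspace: since $\Ker b_W$ is itself a subspace of $W$, the condition $f(\Ker b_V) \subset \Ker b_W$ is preserved under sums and scalar multiples, and the zero morphism plainly satisfies it.

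To compute the codimension, I would introduce the restriction map
\[
\rho : \Hom(V, W) \to \Hom(\Ker b_V, W), \qquad \rho(f) = f|_{\Ker b_V}.
\]
This map is surjective, because any linear map out of the subspace $\Ker b_V$ extends to all of $V$ once we pick a linear complement. Under $\rho$ the subspace $A$ is exactly the preimage of $\Hom(\Ker b_V, \Ker b_W)$, the latter sitting inside $\Hom(\Ker b_V, W)$ through the inclusion $\Ker b_W \hookrightarrow W$.

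The key step is then the general fact that a surjective linear map carries the quotient by a preimage isomorphically onto the quotient by the image, giving
\[
\Hom(V, W) / A \cong \Hom(\Ker b_V, W) / \Hom(\Ker b_V, \Ker b_W).
\]
Hence the codimension of $A$ equals that of $\Hom(\Ker b_V, \Ker b_W)$ in $\Hom(\Ker b_V, W)$, which is the dimension count
\[
\dim \Ker b_V \cdot \dim W - \dim \Ker b_V \cdot \dim \Ker b_W = \dim \Ker b_V(\dim W - \dim \Ker b_W),
\]
exactly the claimed value.

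I do not expect any genuine obstacle here, since the theorem has already isolated the relevant condition and reduced everything to linear algebra. The only points that merit a sentence of care are the surjectivity of $\rho$ and the codimension-preservation statement for preimages under surjections; both are standard, and I would dispatch them inline rather than dwell on them.
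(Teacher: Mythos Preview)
Your proof is correct and follows essentially the same approach as the paper: both identify the subspace via the condition $f(\Ker b_V)\subset\Ker b_W$ from the preceding theorem and then compute the codimension by isolating the component $\Hom(\Ker b_V, W/\Ker b_W)$. The paper does this by choosing explicit splittings of $V$ and $W$ and reading off the vanishing direct summand, while you phrase it via the restriction map $\rho$ and a quotient isomorphism; these are cosmetic rearrangements of the same linear algebra.
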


\begin{proof}
The subset is not empty because the zero morphism always has an adjoint,
and it is clearly a subspace.

Let's pick splittings $V = \Ker b_V \oplus V / \Ker b_V$ and $W = \Ker b_W
\oplus W / \Ker b_W$. Then we get a splitting
$$
\displaylines{
\Hom(V, W)
= \Hom(\Ker b_V, \Ker b_W)
\oplus \Hom(\Ker b_V, W / \Ker b_W)
\hfill\cr\hfill{}
\oplus \Hom(V / \Ker b_v, \Ker b_W)
\oplus \Hom(V / \Ker b_v, W / \Ker b_W).
}
$$
A morphism $f : V \to W$ satisfies $f(\Ker b_V) \subset f(\Ker b_W)$ if and only
if its projection onto $\Hom(\Ker b_V, W / \Ker b_W)$ in this splitting is zero.
\end{proof}

\begin{coro}
If $f : V \to W$ has an adjoint $f^\dagger$, then $f^\dagger$ also has an adjoint.
\end{coro}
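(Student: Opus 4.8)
The plan is to reduce everything to the criterion established in the preceding theorem, which characterizes exactly which morphisms admit adjoints: a linear morphism $g : A \to B$ of Hermitian spaces has an adjoint if and only if $g(\Ker b_A) \subset \Ker b_B$. Since $f^\dagger : W \to V$ is itself such a morphism, applying this criterion to $f^\dagger$ reduces the corollary to verifying the single containment $f^\dagger(\Ker b_W) \subset \Ker b_V$.

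To check this containment, I would take an arbitrary $x \in \Ker b_W$ and show that $f^\dagger x$ lies in $\Ker b_V$, that is, that $b_V(f^\dagger x, \ov y) = 0$ for every $y \in V$. Here the defining property of the adjoint does all the work: by definition $b_V(f^\dagger x, \ov y) = b_W(x, \ov{fy})$, and since $x \in \Ker b_W$ the right-hand side vanishes for every $y$, because $b_W(x, \ov z) = 0$ for all $z \in W$ and in particular for $z = fy$. Hence $f^\dagger x \in \Ker b_V$, which is exactly the required containment.

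With the containment in hand, the theorem immediately produces an adjoint of $f^\dagger$, finishing the proof. I do not expect any genuine obstacle: the statement is a formal consequence of the adjoint criterion, and the only computation—pushing the kernel condition through the defining identity of $f^\dagger$—is a one-line manipulation. The one subtlety worth flagging is that the argument uses nothing about $f^\dagger$ beyond its defining relation as an adjoint of $f$, so it applies to \emph{any} adjoint of $f$ and not to a distinguished choice; this is consistent with the non-uniqueness of adjoints recorded in the theorem (the $\Hom(W, \Ker b_V)$ torsor structure), and indeed one could alternatively deduce the corollary by noting that the torsor of adjoints is non-empty precisely when $f(\Ker b_V) \subset \Ker b_W$, then transposing that inclusion.
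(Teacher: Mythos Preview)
Your proof is correct and is essentially identical to the paper's: both verify $f^\dagger(\Ker b_W)\subset\Ker b_V$ via the one-line computation $b_V(f^\dagger x,\ov y)=b_W(x,\ov{fy})=0$ and then invoke the adjoint criterion from the preceding theorem.
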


\begin{proof}
If $x \in \Ker W$, then
\[
b_V(f^\dagger x, \ov y)
= b_W(x, \ov{fy})
= 0
\]
for all $y \in V$, so $f^\dagger x \in \Ker b_V$.
\end{proof}

The difference between a morphism $f$ and an adjoint of its adjoint is an element of $\Hom(V, \Ker b_V)$, so a double adjoint is unique and equal to $f$ if and only if $b_V$ is non-degenerate.

\begin{prop}
Let $j : S \to V$ be a linear morphism, let $b_V$ be a Hermitian form on $V$, and set $b_S = j^*b_V$. Then $j$ admits an adjoint.
\end{prop}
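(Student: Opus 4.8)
The plan is to reduce the statement to the adjoint-existence criterion of the theorem above and then to check its hypothesis. By that theorem, $j$ admits an adjoint precisely when $j(\Ker b_S) \subseteq \Ker b_V$, so the whole proof comes down to this single inclusion; the adjoint itself, together with its description as a $\Hom(V, \Ker b_S)$ torsor, is then handed to us for free.

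First I would identify the kernel of the pulled-back form. Unwinding $b_S = j^* b_V$, a vector $x \in S$ lies in $\Ker b_S$ exactly when $b_V(jx, \overline{jy}) = 0$ for every $y \in S$, i.e. when $jx$ is $b_V$-orthogonal to the entire image $j(S)$. Hence $j(\Ker b_S)$ is the radical $j(S) \cap j(S)^{\perp}$ of the restriction of $b_V$ to $j(S)$, and the inclusion to be proved reads: a vector of $j(S)$ that is $b_V$-orthogonal to all of $j(S)$ is automatically $b_V$-orthogonal to all of $V$.

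This implication is the crux, and it is where positivity is spent. Taking $y = x$ gives $b_V(jx, \overline{jx}) = 0$, so it is enough to know that any $w \in V$ with $b_V(w, \bar w) = 0$ already lies in $\Ker b_V$, i.e. that $\Ker b_V = \{ w \in V : b_V(w, \bar w) = 0 \}$. For a positive semi-definite form this is the Cauchy--Schwarz inequality $|b_V(w, \bar v)|^2 \le b_V(w, \bar w)\, b_V(v, \bar v)$, whose vanishing right-hand side forces $b_V(w, \bar v) = 0$ for all $v$, so $w = jx \in \Ker b_V$. I expect this to be the only non-formal step: the reduction to the inclusion is automatic, whereas the inclusion genuinely needs the positivity of $b_V$ --- for an indefinite form it fails, since a null line inside a hyperbolic plane is $b_V$-orthogonal to itself yet not to the ambient space, and then $j$ has no adjoint.
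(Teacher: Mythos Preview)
Your approach matches the paper's: both reduce, via the adjoint-existence theorem, to verifying the inclusion $j(\Ker b_S)\subset\Ker b_V$. The paper dispatches this in one line by asserting the equality $\Ker b_S = j^{-1}(\Ker b_V)+\Ker j$, from which the inclusion is immediate; you instead identify $j(\Ker b_S)$ with the radical of $b_V|_{j(S)}$ and then invoke Cauchy--Schwarz.

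Your diagnosis is in fact sharper than the paper's. Since $\Ker j\subset j^{-1}(\Ker b_V)$, the paper's asserted equality reads $\Ker b_S = j^{-1}(\Ker b_V)$; the inclusion $\supset$ is automatic, but $\subset$ says exactly that every vector of $j(S)$ orthogonal to $j(S)$ already lies in $\Ker b_V$ --- precisely the step you isolate. Your null-line-in-a-hyperbolic-plane example shows this fails for indefinite $b_V$ (take $V=\kk C^2$ with form $z_1\bar w_1 - z_2\bar w_2$ and $j(s)=(s,s)$: then $b_S=0$, so $\Ker b_S=S$, yet $j(S)\cap\Ker b_V=0$). So the proposition, as stated for an arbitrary Hermitian form, is false, and you have correctly located the missing hypothesis of semidefiniteness. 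Under that hypothesis --- which is the only case the paper actually uses downstream --- your argument and the paper's are essentially the same, with yours supplying the justification the paper omits.
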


\begin{proof}
We have
\[
\Ker b_S = j^{-1}(\Ker b_V) + \Ker j
\]
so $j(\Ker b_S) = j(j^{-1}(\Ker b_V)) \subset \Ker b_V$.
\end{proof}

\subsection*{Orthogonal complements}

The objective of this section is to show how to obtain a Hermitian form on a quotient space given one on the ambient space. It is well-known how to do this when the forms are non-degenerate (for example, take duals to embed the quotient dual in the ambient dual, restrict the dual metric, and invert) but less clear how this should work in the general case.

\begin{defi}
Let $(V,b)$ be a Hermitian space, and let $S \subset V$ be a subspace. The \emph{orthogonal complement} of $S$ is
\[
S^\perp = \{ v \in V \mid b(v, \ov w) = 0 \text{ for all $w \in S$}\}.
\]
\end{defi}

The orthogonal complement of a subspace is clearly again a subspace.

\begin{prop}
\[
\Ker b \subset S^\perp.
\]
\end{prop}

\begin{proof}
Let $v \in \Ker b$. Then $b(v, \ov w) = 0$ for all $w \in S$.
\end{proof}

As before we have the quotient space $q : V \to V / \Ker b$, and the induced non-degenerate form $\hat b$ on $V / \Ker b$.

\begin{prop}
Let $S \subset V$. Then
\[
q(S^\perp) = q(S)^\perp,
\]
where the orthogonal complement on the right-hand side is with respect to $\hat b$.
\end{prop}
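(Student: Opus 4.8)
The plan is to prove the two inclusions separately, in each direction translating the relevant orthogonality condition between $b$ and $\hat b$ through the identity $\hat b(qx, \ov{qy}) = b(x, \ov y)$ that defines $\hat b$ in the purge proposition, and using that $q$ is surjective. Since every element of $q(S)$ is of the form $qw$ with $w \in S$, the condition defining $q(S)^\perp$ reads: $\xi \in q(S)^\perp$ if and only if $\hat b(\xi, \ov{qw}) = 0$ for all $w \in S$. This reformulation is what makes the two sides directly comparable.

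For the inclusion $q(S^\perp) \subset q(S)^\perp$, I would take $v \in S^\perp$ and any $w \in S$, and compute $\hat b(qv, \ov{qw}) = b(v, \ov w) = 0$, the last equality holding because $v$ is orthogonal to all of $S$. As this holds for every $w \in S$, we get $qv \in q(S)^\perp$, so the image $q(S^\perp)$ lands in $q(S)^\perp$.

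For the reverse inclusion $q(S)^\perp \subset q(S^\perp)$, the key move is to invoke surjectivity of $q$ to choose a representative: given a class $\xi \in q(S)^\perp$, write $\xi = qv$ for some $v \in V$. The hypothesis gives $\hat b(qv, \ov{qw}) = 0$ for all $w \in S$, which by the same defining identity is exactly $b(v, \ov w) = 0$ for all $w \in S$; hence $v \in S^\perp$ and $\xi = qv \in q(S^\perp)$. This closes the circle.

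There is no real obstacle here: the statement is a formal consequence of the purge identity together with the surjectivity of $q$. The only point that deserves a moment's care is in the reverse inclusion, where one must check that the orthogonality condition holding in the quotient actually lifts to a genuine representative in $S^\perp$ rather than to some unrelated coset. Surjectivity produces a representative $v$, and the identity transfers the condition verbatim; moreover any two representatives differ by an element of $\Ker b$, which by the previous proposition is already contained in $S^\perp$, so the choice of representative is immaterial.
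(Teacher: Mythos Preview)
Your proof is correct and follows essentially the same approach as the paper: both arguments prove the two inclusions separately by translating between $b$ and $\hat b$ via the defining identity, and invoke surjectivity of $q$ to pick a representative for the reverse inclusion. Your added remark about the choice of representative being immaterial is a nice touch but, as you note, not needed for the argument to go through.
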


\begin{proof}
Let $v \in S^\perp$, so $b(v, \ov w) = 0$ for all $w \in S$. Then $\hat b(qv, \ov{qw}) = 0$ for all $w \in S$, which means that $\hat b(qv, \ov w) = 0$ for all $w \in q(S)$, so $qv \in q(S)^\perp$.

Suppose then that $v \in q(S)^\perp$, so $\hat b(v, \ov w) = 0$ for all $w \in q(S)$. If $x \in V$ is such that $qx = v$, then this means that $b(x, \ov w) = 0$ for all $w \in S$. Then $x \in S^\perp$, so $qx \in q(S^\perp)$.
\end{proof}

\begin{prop}
\[
S + S^\perp = V.
\]
\end{prop}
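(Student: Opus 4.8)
The plan is to push the whole question down to the purged space $V/\Ker b$, where orthogonal decompositions behave classically, and then pull the conclusion back up along $q$. The two preceding propositions on orthogonal complements are exactly the transfer tools I need: one gives $\Ker b \subset S^\perp$, and the other gives $q(S^\perp) = q(S)^\perp$. So I expect the proof to be almost entirely formal once the decomposition is established downstairs.

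First I would work in $V/\Ker b$. There the induced form $\hat b$ from the purge proposition is non-degenerate, and for the argument I will use that it is in fact positive definite, so that no nonzero vector of $q(S)$ is orthogonal to all of $q(S)$. For a positive-definite inner product the usual Gram--Schmidt construction of an orthonormal basis adapted to a subspace yields a genuine direct sum $q(S) \oplus q(S)^\perp = V/\Ker b$; in particular $q(S) + q(S)^\perp = V/\Ker b$. Rewriting the right-hand complement with $q(S^\perp) = q(S)^\perp$ turns this into $q(S) + q(S^\perp) = q(S + S^\perp) = V/\Ker b$.

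It remains to lift this equality. Since $q$ is surjective and $\Ker q = \Ker b$, the identity $q(S + S^\perp) = V/\Ker b$ forces $V = (S + S^\perp) + \Ker b$. Now $\Ker b \subset S^\perp$ from the earlier proposition, so the kernel summand is absorbed into $S^\perp$ and we are left with $S + S^\perp = V$, as desired.

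The step carrying the real weight is the direct-sum decomposition in $V/\Ker b$, and this is the point I would watch most carefully: it uses positive-definiteness of $\hat b$, not merely its non-degeneracy. For an indefinite non-degenerate form the subspace $q(S)$ can be isotropic and meet its own orthogonal complement, in which case $q(S) + q(S)^\perp$ fails to exhaust $V/\Ker b$ and the conclusion breaks down. So the main obstacle is not computational at all but a matter of hypotheses — pinning down that the statement requires $b$ to be positive semi-definite (equivalently $\hat b$ positive definite); granting that, the reductions through $q$ are routine.
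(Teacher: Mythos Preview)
Your approach is exactly the paper's: push to the purge $V/\Ker b$, use $q(S^\perp)=q(S)^\perp$, invoke the orthogonal decomposition there, and pull back along $q$ using $\Ker b\subset S^\perp$. You are also more explicit than the paper about the lifting step; the paper just writes ``as $q$ is surjective, this implies the result,'' while you spell out that $\Ker q=\Ker b\subset S^\perp$ is what lets the preimage absorb the kernel.

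Where you diverge from the paper is precisely the point you flag, and you are right to flag it. The paper's proof reads ``$q(S)+q(S)^\perp = V/\Ker b$ as $\hat b$ is non-degenerate,'' but non-degeneracy alone does \emph{not} give this: on $\mathbb{C}^2$ with $b(x,\bar y)=x_1\bar y_1 - x_2\bar y_2$ and $S=\Span\{(1,1)\}$ one has $S^\perp=S$, so $S+S^\perp\neq V$. The same example breaks the paper's next proposition ($S\cap S^\perp = S\cap\Ker b$) as stated. So your instinct is correct: the argument, as written in the paper, silently uses that $\hat b$ is definite (equivalently, that $b$ is semi-definite), and the proposition is simply false for indefinite Hermitian forms. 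Your proof is the paper's proof with that hypothesis made honest.
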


\begin{proof}
We have
\[
q(S + S^\perp)
= q(S) + q(S^\perp)
= q(S) + q(S)^\perp
= V / \Ker b
\]
as $\hat q$ is non-degenerate. As $q$ is surjective, this implies the result.
\end{proof}

\begin{prop}
\[
S \cap S^\perp = S \cap \Ker b.
\]
\end{prop}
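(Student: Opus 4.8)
The plan is to prove the two inclusions separately, reusing the two preceding propositions rather than unwinding the definitions by hand. The inclusion $S \cap \Ker b \subset S \cap S^\perp$ is essentially free: we already know that $\Ker b \subset S^\perp$, so any $x \in S \cap \Ker b$ lies in $S$ and, through $\Ker b$, in $S^\perp$ as well. There is nothing to compute there.

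The content is in the reverse inclusion $S \cap S^\perp \subset S \cap \Ker b$. The subtlety is that membership in $S^\perp$ only gives $b(x, \ov w) = 0$ for $w \in S$, whereas membership in $\Ker b$ demands this for all $w \in V$, so the real task is to upgrade ``orthogonal to $S$'' into ``orthogonal to $V$''. The tool for this is the preceding proposition $S + S^\perp = V$. I would take an arbitrary $y \in V$, write $y = s + s'$ with $s \in S$ and $s' \in S^\perp$, and split $b(x, \ov y) = b(x, \ov s) + b(x, \ov{s'})$. The first term vanishes because $x \in S^\perp$ and $s \in S$. For the second I would use Hermitian symmetry to rewrite $b(x, \ov{s'}) = \ov{b(s', \ov x)}$, which vanishes because $s' \in S^\perp$ and $x \in S$. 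Hence $b(x, \ov y) = 0$ for every $y \in V$, so $x \in \Ker b$; together with $x \in S$ this gives $x \in S \cap \Ker b$.

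I expect the only delicate point to be noticing that one must feed $x$ into \emph{both} slots of $b$: once as the ``$S^\perp$ vector'' tested against $s \in S$, and once (after conjugation) as the ``$S$ vector'' tested against $s' \in S^\perp$. This is precisely what is available because $x$ sits in the intersection and so plays both roles; everything else is bookkeeping.

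As an alternative I could run the same reduction used in the previous propositions and push down to the purge $q : V \to V / \Ker b$, where the identity $q(S^\perp) = q(S)^\perp$ places $qx$ in $q(S) \cap q(S)^\perp$, from which $qx = 0$ would follow. That route works as well, but it leans on the induced form $\hat b$ being non-degenerate on $q(S)$, so the direct argument above is cleaner and is the one I would write out.
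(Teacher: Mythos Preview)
Your argument is correct. The easy inclusion is handled exactly as in the paper, and for the nontrivial inclusion your use of $V = S + S^\perp$ together with the Hermitian symmetry to kill both summands is clean and complete.

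The paper, however, takes precisely the route you describe as your \emph{alternative}: it pushes down via the purge $q : V \to V/\Ker b$, notes that $q(S \cap S^\perp) = q(S) \cap q(S)^\perp = 0$ because $\hat b$ is non-degenerate, and concludes $S \cap S^\perp \subset \Ker q = \Ker b$. (The equality $q(S \cap S^\perp) = q(S) \cap q(S^\perp)$ is legitimate because $\Ker q = \Ker b \subset S^\perp$, though the paper does not pause on this.) So your dismissed alternative is the paper's actual proof, and your preferred argument is genuinely different.

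What each buys: the paper's approach keeps the parallelism with the two preceding propositions, all of which are proved by the same ``purge and use the non-degenerate case'' template. Your direct approach avoids touching the quotient at all and instead cashes in the already-established $S + S^\perp = V$; it is arguably more self-contained once that identity is in hand, and it makes the role of Hermitian symmetry explicit. Both are short; neither is clearly superior.
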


\begin{proof}
We have
\[
q(S \cap S^\perp)
= q(S) \cap q(S^\perp)
= q(S) \cap q(S)^\perp
= 0
\]
as $\hat q$ is non-degenerate,
so $S \cap S^\perp \subset \Ker q = \Ker b$. Conversely, $\Ker b \subset S^\perp$, so $S \cap \Ker b \subset S \cap S^\perp$.
\end{proof}

If $f : S \to V$ is a linear morphism and $b_V$ is a Hermitian form on $V$, then
we get an induced Hermitian form $b_S := f^*b_V$ on $S$. We can ask whether the
same happens when we have a morphism $f : V \to Q$ and a form $b_V$. Clearly we
can only expect something on the image of $f$, so we should take it to be
surjective. When the form on $b_V$ is positive-definite, we can define a form on
$Q$ by using the injection $0 \to Q^* \to V^*$ and the dual Hermitian forms. In
general, we can still get a Hermitian form on the quotient space by using
orthogonal complements.

\begin{theo}
Let $q : V \to Q$ be a surjective morphism. A Hermitian form $b_V$ on $V$ induces a Hermitian form $b_Q$ on $Q$.
\end{theo}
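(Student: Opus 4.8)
The plan is to realize $Q$ as a quotient of an orthogonal complement inside $V$ and then push $b_V$ down to it. Write $S = \Ker q$, so that $q$ identifies $Q$ with $V/S$, and form the orthogonal complement $S^\perp$ of $S$ with respect to $b_V$. The map I want to exploit is the restriction $q|_{S^\perp} \colon S^\perp \to Q$. It is surjective: by the proposition $S + S^\perp = V$, every class in $V/S$ has a representative lying in $S^\perp$. Its kernel is $S^\perp \cap S$, which by the proposition $S \cap S^\perp = S \cap \Ker b_V$ equals $S \cap \Ker b_V$. Thus $q|_{S^\perp}$ realizes $Q$ as $S^\perp$ modulo a subspace sitting inside $\Ker b_V$.

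With this in hand I would define $b_Q$ by lifting. Given $x, y \in Q$, choose $\tilde x, \tilde y \in S^\perp$ with $q\tilde x = x$ and $q\tilde y = y$, and set
\[
b_Q(x, \ov y) = b_V(\tilde x, \ov{\tilde y}).
\]
That $b_Q$ is Hermitian is then inherited directly from $b_V$, and the construction is canonical, since $S^\perp$ is determined by $b_V$ and $S$ alone.

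The one thing that genuinely needs checking --- and the only place the degeneracy of $b_V$ could cause trouble --- is that this definition is independent of the chosen lifts. Two lifts of $x$ differ by an element of $\Ker(q|_{S^\perp}) = S \cap \Ker b_V$, and likewise for $y$; since any such difference lies in $\Ker b_V$, the form $b_V$ cannot detect it, so $b_V(\tilde x, \ov{\tilde y})$ does not depend on the representatives. This is exactly the subtlety absent from the non-degenerate case, where $S^\perp$ is a genuine complement of $S$ and one simply transports $b_V|_{S^\perp}$ across the isomorphism $S^\perp \cong Q$. Here $S^\perp$ overlaps $S$ precisely in $S \cap \Ker b_V$, and the whole point is that this overlap is invisible to the form. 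I expect the main obstacle to be conceptual rather than computational: recognizing that the failure of $S^\perp$ to complement $S$ is confined to the kernel of $b_V$, so that passing to lifts in $S^\perp$ still produces a well-defined object on $Q$.
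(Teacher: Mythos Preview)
Your argument is correct and matches the paper's proof essentially line for line: both set $S=\Ker q$, use the restriction $q|_{S^\perp}$ with kernel $S\cap\Ker b_V$ (via the propositions $S+S^\perp=V$ and $S\cap S^\perp=S\cap\Ker b_V$), and check well-definedness by noting that any ambiguity in the lift lies in $\Ker b_V$.
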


\begin{proof}
Let $S = \Ker q$, and consider $S^\perp \subset V$. We have a short exact sequence
\[
\begin{tikzcd}
0 \ar[r] &
S \cap \Ker b_V \ar[r] &
S^\perp \ar[r,"q"] &
Q \ar[r] &
0
\end{tikzcd}
\]
because $S^\perp \cap \Ker q = S^\perp \cap S = S \cap \Ker b_V$, and $q$ restricted to $S^\perp$ is still surjective. We define
\[
b_Q(qx, \ov{qy})
= b_V(x, \ov y)
\]
for $x, y \in S^\perp$. This is well-defined by the above, as any other element $x'$ that maps to $qx$ satisfies $x' - x \in S \cap \Ker b_V \subset \Ker b_V$.
\end{proof}

\begin{prop}
Let $q : V \to Q$ be surjective, and let $b_V$ and $b_Q$ be as above. Then $q$ admits an adjoint $q^\dagger : Q \to V$.
\end{prop}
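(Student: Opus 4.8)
The plan is to reduce the statement to the adjoint existence criterion proved earlier, namely that a linear morphism admits an adjoint if and only if it maps the kernel of the source form into the kernel of the target form. Applied to $q : V \to Q$ with source form $b_V$ and target form $b_Q$, this means it suffices to verify the single inclusion $q(\Ker b_V) \subset \Ker b_Q$.

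To check this, I would take $v \in \Ker b_V$ and show $qv \in \Ker b_Q$, that is, that $b_Q(qv, \ov w) = 0$ for every $w \in Q$. The key observation is that $v$ is automatically a legitimate representative for computing $b_Q$: by the earlier proposition $\Ker b_V \subset S^\perp$ with $S = \Ker q$, so $v \in S^\perp$, which is precisely the subspace on which $b_Q$ was defined through the formula $b_Q(qx, \ov{qy}) = b_V(x, \ov y)$.

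Next, since the restriction of $q$ to $S^\perp$ is still surjective (established in the construction of $b_Q$), every element of $Q$ can be written as $qy$ for some $y \in S^\perp$. For such a $y$ the defining formula gives $b_Q(qv, \ov{qy}) = b_V(v, \ov y)$, and this vanishes because $v \in \Ker b_V$. Hence $qv \in \Ker b_Q$, which establishes the desired inclusion and, by the criterion, the existence of $q^\dagger$.

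I do not expect any genuine obstacle here; the only point requiring care is the well-definedness bookkeeping, namely ensuring that both the representative $v$ of $qv$ and the representatives $y$ of arbitrary elements of $Q$ lie in $S^\perp$, so that the defining formula for $b_Q$ actually applies. Both facts are immediate from results already in hand (the inclusion $\Ker b_V \subset S^\perp$ and the surjectivity of $q|_{S^\perp}$), so once the criterion is invoked the verification collapses to a one-line computation.
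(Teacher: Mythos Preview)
Your proof is correct and follows the same approach as the paper: reduce to the adjoint existence criterion and verify $q(\Ker b_V) \subset \Ker b_Q$ via the defining formula for $b_Q$. You are in fact slightly more careful than the paper in explicitly noting that $v \in S^\perp$ and invoking the surjectivity of $q|_{S^\perp}$ so that the defining formula legitimately applies.
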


\begin{proof}
We need to show that $q(\Ker b_V) \subset \Ker b_Q$. If $x, y$ are elements of $V$, we have by definition
\[
b_Q(qx, \ov{qy}) = b_V(x, \ov y).
\]
If $x \in \Ker b_V$, this implies that $b_Q(qx, \ov{qy}) = b_V(x, \ov y) = 0$ for all $y \in V$, so $qx \in \Ker b_Q$.
\end{proof}

\subsection*{Induced forms on tensor algebras}
\label{sec:induced-forms-tensor}

If $V$ is a vector space with a Hermitian form $b_V$, we get induced Hermitian forms on any vector space derived from the tensor algebra of $V$ if we restrict ourselves to spaces that only involve tensor powers of $V$ and not $V^*$. Handling $V^*$, or more generally $\Hom(V,W)$, is more delicate when the forms are possibly degenerate.

If $V$ and $W$ are vector spaces with Hermitian forms $b_V$ and $b_W$, we'd like to use those forms to define a Hermitian form on $\Hom(V,W)$. When the form $b_V$ is non-degenerate, every linear morphism $f : V \to W$ admits a unique adjoint $f^\dagger : W \to V$, and the Fr\"obenius form on $\Hom(V,W)$ is
\[
b(f, \ov g) = \tr(g^\dagger f).
\]
In general, not every morphism $f : V \to W$ admits an adjoint, and the ones that do may not admit unique adjoints. We could try to define a Hermitian form on the subspace of morphisms that admit adjoints by $\tr(g^\dagger f)$, but as adjoints may not be unique this is not well-defined.

The way around this difficulty is again to pass to the quotient spaces. A morphism $f : V \to W$ admits an adjoint if and only if $f(\Ker b_V) \subset \Ker b_W$. In the latter case, there exists a unique morphism $\hat f : V / \Ker b_V \to W / \Ker b_W$ such that the diagram
\[
\begin{tikzcd}
V \ar[d,"\pi_V"] \ar[r,"f"] & W \ar[d,"\pi_W"]
\\
V / \Ker b_V \ar[r,"\hat f"] & W / \Ker b_W
\end{tikzcd}
\]
commutes; it is defined by $\hat f([v]) = \pi_W(f(v))$.

\begin{defi}
Let $(V,b_V)$ and $(W,b_W)$ be Hermitian spaces. We define
\[
b_{\Hom(V,W)}(f, \ov g) := \tr((\hat g)^\dagger \hat f)
\]
on the subspace of $\Hom(V,W)$ of morphisms that admit adjoints.
\end{defi}

If we denote by $\Hom_b(V,W) \subset \Hom(V,W)$ the subspace of morphisms that admit adjoints, the definition says that we have a surjective morphism
\[
\Hom_b(V,W) \to \Hom(V/\Ker b_V, W/\Ker b_W)
\]
and that we define a Hermitian form upstairs by pullback of the one downstairs, which exists because the Hermitian forms there are non-degenerate.

The special case $V^* = \Hom(V, \kk C)$, where $\kk C$ is equipped with the
standard inner product, is interesting.
Here an element $\lambda \in V^*$ admits an adjoint if and only if $\lambda(\Ker b_V)
= 0$, that is, if $\Ker b_V \subset \Ker \lambda$.
Note however that the dimension of the space of dual vectors that admit adjoints
is $\dim V - \dim \Ker b_V$.
Meanwhile, the dimension of the space of morphisms $V \to W$ that admit adjoints
is $\dim V \dim W - \dim \Ker b_V (\dim W - \dim \Ker b_W)$.
We then have
$$
\dim \Hom_b(V, W)
- \dim V^*_b \dim W
= \dim \Ker b_v \dim \Ker b_W,
$$
which is generally positive, so the canonical map $V_b^* \otimes W \to \Hom_b(V,
W)$ is generally only injective and not bijective like in the non-degenerate
case.
The problem is that the image of any element on the left-hand side will be a
morphism $f$ that satisfies $f(\Ker b_V) = 0$, while there are more elements on
the right-hand side.

\section{Chern connections}
\label{sec:degenerate-chern-connections}

Let $E \to X$ be a holomorphic vector bundle with a smooth Hermitian form $b$. We say that a connection $D$ on $E$ is \emph{compatible with $b$} if
\[
d b(s, \ov t) = b(Ds, \ov t) + b(s, \ov{Dt})
\]
for all sections $s, t$ of $E$. By composing a connection with the projection onto $(1,0)$- or $(0,1)$-forms, we get the $(1,0)$- and $(0,1)$-parts of the connection.

\begin{prop}
Let $E \to X$ be a holomorphic vector bundle and let $b$ be a smooth Hermitian form on $E$. Then there exists a connection $D$ on $E$ that's compatible with $b$ and has $D^{0,1} = \bar\partial$.
\end{prop}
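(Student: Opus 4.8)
The plan is to reduce the problem to the non-degenerate case by quotienting out the kernel, exactly as advertised in the introduction, and then to globalize by a partition of unity. First I would observe that the connections $D$ on $E$ with $D^{0,1} = \bar\partial$ form an affine space modelled on $(1,0)$-forms with values in $\End E$, and that the compatibility condition is affine in $D$; hence the compatible connections with $D^{0,1}=\bar\partial$ form an affine, and in particular convex, subspace. A convex combination of local solutions against a partition of unity is therefore again a solution, so it suffices to produce a compatible connection locally, say over a trivialising open set $U$ on which $E$ admits a holomorphic frame.

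Over such a $U$ I would compute what the two conditions say. Writing $D e_j = \sum_i \theta^i_j e_i$, the requirement $D^{0,1} = \bar\partial$ forces each $\theta^i_j$ to be a $(1,0)$-form, and splitting the compatibility identity into types shows that the $(0,1)$-part is the conjugate of the $(1,0)$-part (because $b$ is Hermitian), so only one equation survives:
\[
\sum_i \theta^i_j \, b(e_i, \ov{e_k}) = \partial\, b(e_j, \ov{e_k})
\quad\text{for all } j,k.
\]
In matrix form this reads $\theta^\top B = \partial B$, where $B = (b(e_j,\ov{e_k}))$ is the possibly degenerate Hermitian matrix of the frame. The crux is then fiberwise and linear-algebraic: each column of $\partial B$ must lie in the image of $B$, equivalently must annihilate $\Ker b$, which is precisely the adjoint-existence criterion of the theorem on adjoints applied to $b$ itself. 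The purge supplies the conceptual solution: the induced non-degenerate form $\hat b$ on $\hat E = E/\Ker b$ has an ordinary Chern connection $\hat D$, and lifting $\hat D$ through a smooth splitting of $E \to \hat E$ produces a candidate $\theta$. Since $b$ factors through the quotient, any connection inducing $\hat D$ and having $(0,1)$-part $\bar\partial$ is automatically compatible.

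The step I expect to be the main obstacle is smoothness across the degeneracy locus, not mere existence at a point. The naive fiberwise solution $\theta^\top = (\partial B)\,B^{-1}$ is singular exactly where the rank of $b$ drops, so the delicate content is to choose the lift so that $\theta$ extends smoothly over the locus where $b$ degenerates. This is where the structural results of Section~\ref{sec:degenerate-linear-algebra} --- the purge, orthogonal complements, and the $\Hom(W,\Ker b)$ worth of freedom in choosing adjoints --- must be used coherently in families, so that $\hat b$, its Chern connection, and the chosen splitting all vary smoothly; the residual ambiguity of $D$ along $\Ker b$ then accounts for the non-uniqueness noted in the introduction.
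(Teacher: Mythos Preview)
Your overall architecture matches the paper's: reduce to a trivialising chart with a holomorphic frame, show the local system can be solved, and glue by a partition of unity (your affineness/convexity remark is exactly why the paper's gluing step works). The paper likewise reduces everything to finding, for each frame vector $e_j$, a smooth $(1,0)$-form $A_j$ with values in $E$ satisfying $b(A_j,\ov t)=\partial b(e_j,\ov t)$ for all $t$, which is your equation $\theta^\top B=\partial B$ written invariantly.

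The divergence, and the gap, is in how you produce this local lift. You want to pass to the purge $\hat E=E/\Ker b$, take the honest Chern connection of $\hat b$ there, and pull it back through a smooth splitting of $E\to\hat E$. But $\hat E$ is a vector bundle, and such a splitting can be chosen smoothly, only when the rank of $b$ is locally constant; that is not assumed. The results of Section~\ref{sec:degenerate-linear-algebra} you invoke are fibrewise linear algebra and say nothing about how the purge or the splitting behave in families across a rank jump, so appealing to them does not resolve the difficulty you yourself flag in your last paragraph. As written, your argument only establishes the result on the open constant-rank locus. The paper never forms $\hat E$. It argues sheaf-theoretically instead: the form $\ov t\mapsto\partial b(e_j,\ov t)$ annihilates every smooth section $t$ of $\Ker b$, hence is declared a section of $\Omega^1_X\otimes\im b$; one then lifts along the sheaf surjection $\Omega^1_X\otimes E\to\Omega^1_X\otimes\im b$, which is surjective on sections because sheaves of $\cc C^\infty$-modules are fine. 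That sheaf-level lift, rather than any family version of the purge, is the device you are missing.
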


\begin{proof}
The plan is to construct a connection locally on trivializing neighborhoods of $E$ and to glue the pieces together with a partition of unity.

Let $s$ be a smooth section of $E$ over a neighborhood $U$. We want to solve the equation
\begin{equation}
\label{eq:conn}
\partial_\alpha b(s, \ov t)
= b(A(\alpha), \ov t),
\end{equation}
where $\alpha \in \cc C^\infty(U, T_X)$, $t \in \cc C^\infty(U, E)$ and $A \in \cc A^{1,0}(U, E)$, where the last space is the one of smooth $(1,0)$-forms with values in $E$. Consider the short exact sequence
\[
\begin{tikzcd}
0 \ar[r] &
\Ker b \ar[r] &
E \ar[r,"b"] &
\im b \ar[r] &
0,
\end{tikzcd}
\]
and tensor it by the vector bundle $\Omega^1_X$ to obtain
\[
\begin{tikzcd}
0 \ar[r] &
\Omega^1_X \otimes \Ker b \ar[r] &
\Omega^1_X \otimes E \ar[r,"\id \otimes b"] &
\Omega^1_X \otimes \im b \ar[r] &
0.
\end{tikzcd}
\]
We claim that $\alpha \otimes t \mapsto \partial_\alpha b(s, \ov t)$ is a smooth section of $\Omega^1_X \otimes \im b$. By the above, this is true if it is zero on the sheaf generated by sections of the form $\alpha \otimes t$, where $t \in \Ker b$. But for such a section we have $b(s, \ov t) = 0$ on $U$, so $\partial_\alpha b(s, \ov t) = 0$. By taking a small enough neighborhood so the sheaf morphism above is surjective, or noting that the sheaves are ones of smooth modules which are fine, we obtain a section $A_s \in \cc C^\infty(U, \Omega_X^1 \otimes E)$ such that \eqref{eq:conn} holds for any $\alpha$ and $t$.

Suppose now that there exists a holomorphic frame $(e_1, \ldots, e_r)$ of $E$ over $U$. Such a frame exists over a small enough neighborhood around any point. Let $A_1, \ldots, A_r$ be the sections we just found for $e_1, \ldots, e_r$. Any section $s \in \cc C^\infty(U, E)$ can be written as $s = \sum_j f_j e_j$ for smooth functions $f_j$ on $U$. We define
\[
D s := \sum_{j=1}^r df_j \otimes e_j + f_j A_j
\]
on $U$. We claim that $D$ is a connection on $E$ over $U$ that is compatible with $b$ and whose $(0,1)$-part is $\bar\partial$.

This object $D$ is a $1$-form with values in $E$ by construction of the $A_j$. If $s$ is a section of $E$ and $f$ a smooth function, we write $s = \sum_j f_j e_j$. Then
\begin{align*}
D(fs)
&= \sum d(f f_j) \otimes e_j + f f_j A_j
\\
&= \sum df \otimes f_j e_j + f df \otimes e_j + f f_j A_j
= df \otimes s + f Ds,
\end{align*}
so $D$ is a connection on $E$. The forms $A_j$ are $(1,0)$-forms by construction, so the $(0,1)$-part of $Ds$ is $\sum_j \bar\partial f_j e_j = \bar\partial s$.

Finally, let $f$ and $g$ be smooth functions. We have
\begin{align*}
d b(fe_j, \ov{ge_k})
&= d\bigl(f \ov g b(e_j, \ov e_k) \bigr)
\\
&= df \cdot \ov g b(e_j, \ov e_k)
+ f dg \cdot b(e_j, \ov e_k)
+ \partial b(e_j, \ov e_k)
+ \ov{\partial b(e_k, \ov e_j)}
\\
&= b(df \otimes e_j, \ov{ge_k})
+ b(f e_j, \ov{dg \otimes e_k})
+ b(A_j, \ov e_k)
+ b(e_j, \ov{A_k})
\\
&= b(D(f e_j), \ov{ge_k}) + b(f e_j, \ov{D(g e_k)}).
\end{align*}
For smooth sections $s = \sum f_j e_j$ and $t = \sum g_k e_k$ we deduce from the above and linearity that $D$ is compatible with $b$ over the neighborhood $U$.

We now cover $X$ by neighborhoods $U_\mu$ as above and construct connections $D_\mu$ on each $U_\mu$. We then take a partition of unity $(\phi_\mu)$ relative to this covering, and define a connection $D$ on $E$ by setting
\[
D s = \sum_\mu D_\mu(\phi_\mu s).
\]
This defines a connection on $E$ that's compatible with $b$ and whose $(0,1)$-part is $\bar\partial$.
\end{proof}

We will call a connection $D$ that's compatible with $b$ and has $D^{0,1} = \bar\partial$ a \emph{Chern connection} of $b$. If $b$ has a nontrivial kernel, such a connection is not unique.

\begin{prop}
Let $D_1$ and $D_2$ be Chern connections of $b$. Then $D_1 - D_2$ is an element of $\cc A^{1,0}(\Hom(E, \Ker b))$.
\end{prop}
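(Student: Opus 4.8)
The plan is to set $\Theta := D_1 - D_2$ and identify it in three stages: first as a tensorial object, then as a $(1,0)$-form, and finally as one valued in $\Hom(E, \Ker b)$. For the first stage I would check that $\Theta$ is $\cc C^\infty$-linear: writing out the Leibniz rule $D_i(fs) = df \otimes s + f\, D_i s$ for $i = 1, 2$ and subtracting, the inhomogeneous terms $df \otimes s$ cancel, so $\Theta(fs) = f\, \Theta(s)$. Hence $\Theta$ is not a differential operator but a genuine $1$-form with values in $\End(E)$. Since both Chern connections satisfy $D_i^{0,1} = \bar\partial$, their difference has vanishing $(0,1)$-part, placing $\Theta$ in $\cc A^{1,0}(\End E)$.

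It remains to see that $\Theta(s)$ lands in $\Ker b$ for every section $s$, and for this I would use compatibility with $b$. Writing the defining identity $d\, b(s, \ov t) = b(D_i s, \ov t) + b(s, \ov{D_i t})$ for each $i$ and subtracting, the left-hand sides cancel and we are left with
\[
b(\Theta s, \ov t) + b(s, \ov{\Theta t}) = 0
\]
for all sections $s, t$ of $E$.

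The crux of the argument — and the step I expect to need the most care — is the bidegree bookkeeping in this identity. The object $\Theta s$ is a $(1,0)$-form with values in $E$, so $b(\Theta s, \ov t)$ is a scalar $(1,0)$-form; on the other hand $\Theta t$ is a $(1,0)$-form, so $\ov{\Theta t}$ is a $(0,1)$-form and $b(s, \ov{\Theta t})$ is a scalar $(0,1)$-form. The two summands thus lie in complementary bidegrees, and as their sum vanishes each must vanish separately. In particular $b(\Theta s, \ov t) = 0$ for all $t$; evaluating on a $(1,0)$ tangent direction $\alpha$, this says $b(\Theta(\alpha) s, \ov t) = 0$ for all $t$, i.e.\ $\Theta(\alpha) s \in \Ker b$. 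Since $s$ and $\alpha$ were arbitrary, $\Theta \in \cc A^{1,0}(\Hom(E, \Ker b))$, as claimed.
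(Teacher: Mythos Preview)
Your proof is correct and follows essentially the same approach as the paper: tensoriality of the difference, vanishing of the $(0,1)$-part, and then compatibility with $b$ to force values in $\Ker b$. The only cosmetic difference is that the paper restricts to holomorphic sections $s,t$ so that $\partial b(s,\ov t) = b(D_i' s,\ov t)$ directly, whereas you keep general sections and separate the identity $b(\Theta s,\ov t)+b(s,\ov{\Theta t})=0$ by bidegree; both routes are equivalent.
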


\begin{proof}
It is well-known that the difference of two connections is an element of $\cc A^1(\End E)$. Both connections here have the same $(0,1)$-part, so their difference is a $(1,0)$-form. If $s$ and $t$ are holomorphic sections, we have
\[
b((D'_1 - D'_2)s, \ov t)
= \partial b(s, \ov t) - \partial b(s, \ov t) = 0,
\]
so $D_1 - D_2 = D_1' - D_2'$ takes values in $\Ker b$.
\end{proof}

\begin{prop}
If $s$ is a section of $\Ker b$ and $D$ is a Chern connection of $E$, then $Ds$ takes values in $\Ker b$.
\end{prop}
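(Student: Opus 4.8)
The plan is to read off everything from the compatibility identity
\[
d\,b(s,\ov t) = b(Ds,\ov t) + b(s,\ov{Dt}),
\]
valid for every section $t$ of $E$ because $D$ is a Chern connection of $b$. The strategy is that a section $s$ valued in $\Ker b$ kills two of the three terms for trivial reasons, which forces the surviving term $b(Ds,\ov t)$ to vanish for every $t$; the pairing $b$ then detects that the $E$-valued form $Ds$ is itself valued in $\Ker b$.

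First I would observe that, since $s(x)\in\Ker b$ at each point, $b(s,\ov t)\equiv 0$ as a function on $X$ for every $t$, so the left-hand side $d\,b(s,\ov t)$ vanishes identically. Next I would dispose of the term $b(s,\ov{Dt})$: here $Dt$ is a $1$-form with values in $E$, and writing $Dt=\sum_i\theta^i\otimes t_i$ locally gives $b(s,\ov{Dt})=\sum_i\ov{\theta^i}\,b(s,\ov{t_i})$, each summand of which is zero because $s$ lies in the kernel. The identity therefore collapses to $b(Ds,\ov t)=0$ for all $t$. Finally, evaluating at a point and a tangent vector shows that $(Ds)$ pairs to zero against $t$ there for every section $t$, hence lands in $\Ker b$; as the point is arbitrary, $Ds$ is a section of $\Ker b$.

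The only genuine subtlety, and hence the step I would be most careful about, is the middle one: correctly interpreting $b(\,\cdot\,,\ov{\,\cdot\,})$ when one slot is a bundle-valued form, and confirming that the kernel hypothesis annihilates $b(s,\ov{Dt})$ componentwise rather than only after evaluating on vector fields. I would also flag that $\Ker b$ need not be a subbundle of constant rank, so the whole argument should be phrased pointwise; the concluding statement that $(Ds)$ pairs trivially with every section at each point is precisely the pointwise meaning of ``$Ds$ takes values in $\Ker b$.''
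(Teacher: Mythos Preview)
Your argument is correct and is essentially the paper's proof. The paper is terser---it restricts to holomorphic $t$ and writes the single line $0 = \partial b(s,\ov t) = b(Ds,\ov t)$---but the underlying mechanism is the same: $s\in\Ker b$ kills both $b(s,\ov t)$ and $b(s,\ov{Dt})$, leaving $b(Ds,\ov t)=0$. Your version with arbitrary $t$ and the full $d$ is slightly more explicit about the middle term, which is a virtue.
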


\begin{proof}
If $s$ is a section of $\Ker b$, then
\[
0
= \partial b(s, \ov t)
= b(Ds, \ov t)
\]
for all holomorphic sections $t$ of $E$, so $D s \in \cc A^1(\Ker b)$.
\end{proof}

\begin{theo}
Let $(E, b) \to X$ be a holomorphic vector bundle with a Hermitian form, and let $D$ be a Chern connection of $E$. The curvature tensor
\[
R(\alpha,\ov\beta,s, \ov t)
= b\bigl(\tfrac i2 D^2_{\alpha\ov\beta}s, \ov t\bigr)
\]
is independent of the choice of $D$.
\end{theo}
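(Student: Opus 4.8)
The plan is to lean on the earlier Proposition that any two Chern connections $D_1, D_2$ of $b$ differ by an element
\[
\eta := D_1 - D_2 \in \cc A^{1,0}(\Hom(E, \Ker b)),
\]
and then to show that neither term in the standard curvature-difference formula survives once it is paired with $b$. Concretely, I would first recall that if $D_1 = D_2 + \eta$ then the full curvatures satisfy
\[
D_1^2 = D_2^2 + d_{D_2}\eta + \eta \wedge \eta,
\]
where $d_{D_2}$ is the exterior covariant derivative induced by $D_2$ on $\End E$ and $\eta \wedge \eta$ uses composition of endomorphisms together with the wedge of forms. Since the curvature tensor only sees the $(1,1)$-part $D^2_{\alpha\ov\beta}$, the whole statement reduces to checking that $b\bigl((d_{D_2}\eta + \eta \wedge \eta)s, \ov t\bigr) = 0$ for all sections $s, t$.

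The $\eta \wedge \eta$ term is the easy one. Because $\eta$ takes values in $\Hom(E, \Ker b)$, for any section $s$ the expression $(\eta \wedge \eta)s = \eta \wedge (\eta s)$ again lands pointwise in $\Ker b$, so it is annihilated by $b$; alternatively one notes that $\eta \wedge \eta$ has type $(2,0)$ and is therefore already invisible to the $(1,1)$-contraction. Either observation disposes of this term without any effort.

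The real work, and the step I expect to be the main obstacle, is the $d_{D_2}\eta$ term, which a priori need \emph{not} take values in $\Ker b$; this is exactly where the compatibility of $D_2$ with $b$ must enter. The idea is to differentiate the identity $b(\eta s, \ov t) = 0$, which holds because $\eta s$ is a $(1,0)$-form valued in $\Ker b$. Extending the compatibility relation $d\,b(s,\ov t) = b(Ds,\ov t) + b(s,\ov{Dt})$ to $E$-valued forms in the usual graded way, applying it to the $1$-form $\eta s$ gives
\[
0 = d\, b(\eta s, \ov t) = b\bigl(d_{D_2}(\eta s), \ov t\bigr) - b(\eta s, \ov{D_2 t}).
\]
The last term vanishes since $\eta s \in \Ker b$. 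Then I would expand $d_{D_2}(\eta s) = (d_{D_2}\eta)s - \eta \wedge D_2 s$ via the Leibniz rule and observe that $b(\eta \wedge D_2 s, \ov t) = 0$, again because $\eta$ maps into $\Ker b$. What remains is precisely $b\bigl((d_{D_2}\eta)s, \ov t\bigr) = 0$.

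Combining the two computations shows that $b$ annihilates $D_1^2 s - D_2^2 s$ as a full $\End E$-valued two-form, and in particular its $(1,1)$-component; hence $R(\alpha,\ov\beta,s,\ov t)$ does not depend on the choice of Chern connection. The only genuinely delicate point is the bookkeeping of signs and wedges when promoting the compatibility identity to $E$-valued forms, but the conceptual content is entirely in the interplay between $\eta$ landing in $\Ker b$ and $D_2$ being compatible with $b$.
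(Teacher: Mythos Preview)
Your proof is correct and follows essentially the same route as the paper: write $D_1 = D_2 + \eta$ with $\eta \in \cc A^{1,0}(\Hom(E,\Ker b))$, expand the square, and argue that each extra term lands in $\Ker b$ and so is killed by $b$. The only cosmetic difference is packaging: the paper writes $(D_2+A)^2 = D_2^2 + AD_2 + D_2A + A^2$ and then simply appeals to the immediately preceding Proposition (a Chern connection sends sections of $\Ker b$ to forms with values in $\Ker b$) to conclude that $D_2A$ takes values in $\Ker b$, whereas you reprove that fact inline by differentiating $b(\eta s,\ov t)=0$ with the compatibility identity. Your exterior-covariant-derivative bookkeeping ($d_{D_2}\eta + \eta\wedge\eta$) is exactly the paper's $D_2A + AD_2 + A^2$, so there is no genuinely new idea here---just a slightly more explicit unpacking of the same step.
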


\begin{proof}
Let $D_1$ and $D_2$ be Chern connections of $b$. Then $D_1 = D_2 + A$, where $A \in \Hom(E, \Ker b)$, and
\[
D_1^2
= (D_2 + A)^2
= D_2^2 + A D_2 + D_2 A + A^2.
\]
The difference $D_1^2 - D_2^2$ thus takes values in $\Ker b$.
\end{proof}

\begin{prop}
Let $(E,b_E) \to X$ and $(F,b_F) \to X$ be holomorphic vector bundles with Hermitian forms and let $D_E$ and $D_F$ be Chern connections of $E$ and $F$. Let $f : E \to F$ be a smooth bundle morphism that admits an adjoint $f^\dagger$. Then $D_{\Hom(E,F)}f$ admits an adjoint and $(D_{\Hom(E,F)}f)^\dagger$ and $D_{\Hom(F,E)}f^\dagger$ are both its adjoints.
\end{prop}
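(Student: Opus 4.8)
The plan is to obtain the adjoint relation for $D_{\Hom(E,F)}f$ by differentiating the defining relation of $f^\dagger$ and expanding with the Leibniz rule. Here $D_{\Hom(E,F)}f$ is a $\Hom(E,F)$-valued $1$-form, and for a tangent direction $\alpha$ the value $(D_{\Hom(E,F)}f)(\alpha)$ is a morphism $E \to F$; the assertion is that this morphism admits an adjoint and that $(D_{\Hom(F,E)}f^\dagger)(\alpha)$ is one. Recall the two Leibniz identities defining the induced connections: for sections $x$ of $F$ and $y$ of $E$,
\[
D_E(f^\dagger x) = (D_{\Hom(F,E)}f^\dagger)(x) + f^\dagger(D_F x),
\qquad
D_F(fy) = (D_{\Hom(E,F)}f)(y) + f(D_E y).
\]
The other essential input is the adjoint identity $b_E(f^\dagger x, \ov y) = b_F(x, \ov{fy})$, which holds for all sections and which I will also apply with form-valued sections in either slot, where it persists by linearity in the first argument and conjugate-linearity in the second.

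First I would apply $d$ to the adjoint identity. Because $D_E$ is compatible with $b_E$ and $D_F$ with $b_F$, the left-hand side becomes $b_E(D_E(f^\dagger x), \ov y) + b_E(f^\dagger x, \ov{D_E y})$ and the right-hand side becomes $b_F(D_F x, \ov{fy}) + b_F(x, \ov{D_F(fy)})$. Substituting the two Leibniz identities turns this into an identity of $1$-forms in which the principal terms are $b_E((D_{\Hom(F,E)}f^\dagger)(x), \ov y)$ on the left and $b_F(x, \ov{(D_{\Hom(E,F)}f)(y)})$ on the right, accompanied by four correction terms: $b_E(f^\dagger(D_F x), \ov y)$ and $b_E(f^\dagger x, \ov{D_E y})$ from the left, and $b_F(D_F x, \ov{fy})$ and $b_F(x, \ov{f(D_E y)})$ from the right.

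These four corrections cancel in pairs. Applying the adjoint identity with the $F$-valued form $D_F x$ in the first slot gives $b_E(f^\dagger(D_F x), \ov y) = b_F(D_F x, \ov{fy})$, which cancels the matching term on the right; applying it with the $E$-valued form $D_E y$ in the second slot gives $b_E(f^\dagger x, \ov{D_E y}) = b_F(x, \ov{f(D_E y)})$, cancelling the other. What survives is precisely $b_E((D_{\Hom(F,E)}f^\dagger)(x), \ov y) = b_F(x, \ov{(D_{\Hom(E,F)}f)(y)})$, which is the adjoint relation exhibiting $D_{\Hom(F,E)}f^\dagger$ as an adjoint of $D_{\Hom(E,F)}f$ in every tangent direction. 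In particular $D_{\Hom(E,F)}f$ admits an adjoint, so $(D_{\Hom(E,F)}f)^\dagger$ is defined and is also an adjoint; by the earlier torsor description the two adjoints differ by a $\Hom(F, \Ker b_E)$-valued form.

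I expect the main obstacle to be bookkeeping rather than conceptual: keeping the conjugations straight and verifying that the adjoint identity really extends to form-valued arguments, so that the four corrections cancel as identities of $1$-forms and not merely of functions. Once that extension is justified — which amounts to $\cc C^\infty$-linearity of $b_E$ and $b_F$ in each slot together with the tensoriality of $f$ and $f^\dagger$ — the computation is forced and the two stated expressions are adjoints by construction.
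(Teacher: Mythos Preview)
Your argument is correct and matches the paper's core computation: differentiate the adjoint identity, expand both sides via the Leibniz rule for the induced connections, and cancel the four cross terms using the adjoint relation applied to the form-valued sections $D_F x$ and $D_E y$. The only organizational difference is that the paper first separately verifies the kernel criterion $(D_{\Hom(E,F)}f)(\Ker b_E) \subset \Ker b_F$ to conclude abstractly that an adjoint exists, and only then carries out the differentiation; your route is slightly leaner, since the differentiation already exhibits $D_{\Hom(F,E)}f^\dagger$ as an explicit adjoint and thereby subsumes the existence step.
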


\begin{proof}
The morphism $f$ admits an adjoint if and only if $f(\Ker b_E) \subset \Ker b_F$. Let $s$ be a section of $\Ker b_E$, so $f(s)$ is a section of $\Ker b_F$. Then
\begin{align*}
0 = d b_F(f(s), \ov t)
&= b_F(D_F(f(s)), \ov t) + b_F(f(s), \ov{Dt})
\\
&= b_F(D_F(f(s)), \ov t)
\\
&= b_F(D_{\Hom(E,F)}f(s), \ov t) + b_F(f(D_Es), \ov t)
\\
&= b_F(D_{\Hom(E,F)}f(s), \ov t)
\end{align*}
for all sections $t$, as $D_Es$ is a one-form with values in $\Ker b_E$. Then $D_{\Hom(E,F)} f(s) \in \cc A^1(\Ker b_F)$ for all sections $s$ of $\Ker b_E$, so $D_{\Hom(E,F)}f$ admits an adjoint.

We have $b_F(f(s), \ov{t}) = b_E(s, \ov{f^\dagger t})$ for all sections $s,t$. Taking the exterior derivative of both sides we get
\[
b_F(D_F(f(s)), \ov t) + b_F(f(s), \ov{D_F t})
= b_E(D_E s, \ov {f^\dagger t}) + b_E(s, \ov{D_E(f^\dagger t)}).
\]
The left-hand side equals
\[
b_F(D_{\Hom(E,F)}f(s), \ov t)
+ b_F(f(D_Es), \ov t)
+ b_E(s, \ov{f^\dagger(D_F t)})
\]
while the right-hand side equals
\[
b_E(f(D_E s), \ov t)
+ b_E(s, \ov{D_{\Hom(F,E)}f^\dagger(t)})
+ b_E(s, \ov{f^\dagger(D_F t)})
\]
so after cancelling terms we have
\[
b_E(s, \ov{D_{\Hom(F,E)}f^\dagger(t)})
= b_F(D_{\Hom(E,F)}f(s), \ov t)
= b_E(s, \ov{(D_{\Hom(E,F)}f)^\dagger(t)})
\]
for all sections $s,t$.
\end{proof}

When $b_E$ is non-degenerate morphisms from $E$ to $F$ admit a unique adjoint, so in that case $D_{\Hom(F,E)}f^\dagger = (D_{\Hom(E,F)}f)^\dagger$. Note that taking the adjoints of forms with values in a bundle conjugates the form part.

\begin{prop}
Let $f : X \to Y$ be a holomorphic morphism and $E \to Y$ be a holomorphic vector bundle.
Let $b$ be a Hermitian form on $E$ with a Chern connection $D$.
Then $f^{*}D$ is a Chern connection of $f^{*}b$ on $f^{*}E \to X$.
\end{prop}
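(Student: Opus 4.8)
The plan is to verify the two defining properties of a Chern connection---compatibility with the form and having $(0,1)$-part equal to $\bar\partial$---directly for $f^*D$, reducing each to the corresponding property of $D$ on $Y$ by exploiting that pullback along a \emph{holomorphic} map is natural with respect to the operations involved. First I would recall the construction of the pullback connection. Over a neighborhood $U \subset Y$ carrying a holomorphic frame $(e_1, \dots, e_r)$ of $E$, the pulled-back sections $(f^*e_1, \dots, f^*e_r)$ form a holomorphic frame of $f^*E$ over $f^{-1}(U)$, holomorphic precisely because $f$ is. The connection $f^*D$ is then the unique connection satisfying $(f^*D)(f^*e_j) = f^*(De_j)$ and extended by the Leibniz rule, where on the right $f^*$ denotes the natural map $\cc A^1(Y, E) \to \cc A^1(X, f^*E)$ pulling back both the form part and the bundle part. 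That this is a well-defined connection is standard; the content to prove is that it is a Chern connection of $f^*b$.

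The key observations are that, because $f$ is holomorphic, the pullback $f^*$ commutes with $d$ and with complex conjugation and preserves the bidegree of forms; in particular it sends $(1,0)$-forms to $(1,0)$-forms. I would also record the identity $f^*b(f^*s, \ov{f^*t}) = f^*\bigl(b(s, \ov t)\bigr)$ for sections $s, t$ of $E$, the right-hand side being the pullback of a function.

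From here the $(0,1)$-part condition is immediate: writing $De_j = \sum_l \omega_{jl} \otimes e_l$, the forms $\omega_{jl}$ are of type $(1,0)$ since $D^{0,1} = \bar\partial$ and $\bar\partial e_j = 0$; applying $f^*$ gives $(f^*D)(f^*e_j) = \sum_l (f^*\omega_{jl}) \otimes f^*e_l$ with each $f^*\omega_{jl}$ again of type $(1,0)$, so $(f^*D)^{0,1}$ annihilates the holomorphic frame $(f^*e_j)$ and hence equals $\bar\partial$. For compatibility it suffices to check the identity on the frame $(f^*e_j)$, since a Leibniz computation as in the existence proposition propagates it to arbitrary sections. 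On $Y$ the compatibility of $D$ reads $d\, b(e_j, \ov e_k) = b(De_j, \ov e_k) + b(e_j, \ov{De_k})$; pulling this back along $f$ and using that $f^*$ commutes with $d$ and conjugation, together with $(f^*D)(f^*e_j) = f^*(De_j)$ and the formula for $f^*b$, yields exactly the compatibility of $f^*D$ with $f^*b$.

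The step I expect to require the most care is matching the form-valued pairings under pullback, namely that $f^*\bigl(b(De_j, \ov e_k)\bigr) = f^*b\bigl((f^*D)(f^*e_j), \ov{f^*e_k}\bigr)$: one must check that pulling back the $E$-valued one-form $De_j$ and then pairing against $f^*b$ agrees with pulling back the scalar one-form $b(De_j, \ov e_k)$. This is the extension of the identity $f^*b(f^*\,\cdot\,, \ov{f^*\,\cdot\,}) = f^*(b(\,\cdot\,, \ov{\,\cdot\,}))$ from sections to $E$-valued forms, and it is exactly where holomorphy of $f$---ensuring $f^*$ is a bidegree-preserving algebra map commuting with conjugation---enters. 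Everything else is bookkeeping.
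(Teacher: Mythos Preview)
Your proposal is correct and follows essentially the same route as the paper: verify the two Chern-connection conditions on pulled-back holomorphic sections using that $f^*$ commutes with $d$, preserves bidegree (since $f$ is holomorphic), and satisfies $f^*b(f^*s,\ov{f^*t})=f^*(b(s,\ov t))$. The paper compresses both checks into the single chain $\partial_\xi f^*b(f^*s,\ov{f^*t}) = f^*(\partial_{f_*\xi} b(s,\ov t)) = f^*(b(D_{f_*\xi}s,\ov t)) = f^*b(f^*D_\xi f^*s,\ov{f^*t})$ for holomorphic $s,t$, whereas you separate the $(0,1)$-part and compatibility verifications and work with a frame; the content is the same.
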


\begin{proof}
If $s$ and $t$ are holomorphic sections of $E$ and $\xi$ a tangent vector at a point on $X$ we have
\[
\partial_{\xi} f^{*} b(f^{*}s, \overline{f^{*}t})
= f^{*}(\partial_{f_{*}\xi} b(s, \overline t))
= f^{*}(b(D_{f_{*}\xi}s, \overline{t}))
= f^{*}b(f^{*}D_{\xi} f^{*}s, \overline{f^{*}t}).
\]
As $f^{*}E$ is generated by sections of the form $f^{*}s$, this completes the proof.
\end{proof}

\begin{prop}
Let $X$ be a complex manifold and let $b$ be a Hermitian form on $T_X$.
Let $\omega = -\operatorname{Im} b$ be the associated K\"ahler form.
Then $d \omega = 0$ if and only if the torsion tensor $\tau(\alpha,\beta) = D_\alpha \beta - D_\beta \alpha - [\alpha, \beta]$ of $b$ takes values in $\Ker b$.
\end{prop}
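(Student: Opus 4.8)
The plan is to reduce the whole statement to a single pointwise identity relating the $(2,1)$-part of $d\omega$ to the torsion paired against $b$. Since $\omega$ is a real $(1,1)$-form, we have $d\omega = \partial\omega + \bar\partial\omega$ with $\partial\omega$ of type $(2,1)$ and $\bar\partial\omega = \overline{\partial\omega}$ of type $(1,2)$; as these sit in different bidegrees, $d\omega = 0$ if and only if $\partial\omega = 0$. Being a $(2,1)$-form, $\partial\omega$ is determined by its values $d\omega(\alpha,\beta,\ov\gamma)$ on vectors $\alpha,\beta$ of type $(1,0)$ and $\ov\gamma$ of type $(0,1)$, so the target of the computation is the identity
\[
d\omega(\alpha,\beta,\ov\gamma) = c\, b\bigl(\tau(\alpha,\beta),\ov\gamma\bigr),
\]
where $c$ is the nonzero constant for which $\omega(\alpha,\ov\gamma) = c\, b(\alpha,\ov\gamma)$ on mixed-type arguments (this is what $\omega = -\operatorname{Im} b$ encodes). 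Note that $\tau(\alpha,\beta)$ is genuinely a $(1,0)$-vector: for $(1,0)$ fields $\alpha,\beta$ the bracket $[\alpha,\beta]$ stays of type $(1,0)$ and $D_\alpha = D^{1,0}_\alpha$, so $\tau$ is a $T_X$-valued $(2,0)$-form and pairing it against $b$ makes sense.

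To establish the identity I would fix a Chern connection $D$ of $b$ and evaluate both sides on a local holomorphic coordinate frame, $\alpha = \partial_i$, $\beta = \partial_j$, $\ov\gamma = \partial_{\bar k}$. Both sides are tensorial, so this costs nothing, and coordinate fields are convenient because they commute and the $\partial_i$ are holomorphic. In the intrinsic formula for $d\omega$ every Lie-bracket term then drops out and $\omega(\partial_i,\partial_j)=0$ by type, leaving $d\omega(\partial_i,\partial_j,\partial_{\bar k}) = c[\partial_i b(\partial_j,\partial_{\bar k}) - \partial_j b(\partial_i,\partial_{\bar k})]$. Now I apply compatibility of $D$ with $b$ contracted in the $(1,0)$ direction $\partial_i$, which writes $\partial_i b(\partial_j,\partial_{\bar k}) = b(D_{\partial_i}\partial_j,\partial_{\bar k}) + b(\partial_j, \overline{D_{\partial_{\bar i}}\partial_k})$. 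The hard part, such as it is, lives exactly here: the second term must vanish, and it does precisely because $D^{0,1} = \bar\partial$ and $\partial_k$ is holomorphic, so $D_{\partial_{\bar i}}\partial_k = \bar\partial_{\partial_{\bar i}}\partial_k = 0$. What remains is $c\, b(D_{\partial_i}\partial_j - D_{\partial_j}\partial_i, \partial_{\bar k}) = c\, b(\tau(\partial_i,\partial_j),\partial_{\bar k})$, using $[\partial_i,\partial_j] = 0$.

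With the identity in hand both implications fall out of the description $\Ker b = \{v : b(v,\ov w) = 0 \text{ for all } w\}$. If $\tau$ takes values in $\Ker b$ the right-hand side vanishes identically, so $\partial\omega = 0$ and hence $d\omega = 0$. Conversely, if $d\omega = 0$ then $b(\tau(\partial_i,\partial_j),\partial_{\bar k}) = 0$ for all $i,j,k$; fixing $i,j$ and letting $k$ vary shows that $\tau(\partial_i,\partial_j)$ pairs trivially under $b$ against every antiholomorphic vector, hence lies in $\Ker b$.

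The computation itself is routine, so the point worth stressing is conceptual rather than technical: the exterior derivative detects the torsion only through the pairing $b(\tau(\cdot,\cdot),\cdot)$. In the non-degenerate case this pairing vanishes exactly when $\tau = 0$, recovering the classical ``K\"ahler iff torsion-free'' statement; when $b$ is degenerate it is blind to the part of $\tau$ lying in $\Ker b$, which is precisely why the sharpest available conclusion is $\tau \in \Ker b$ and not $\tau = 0$. I would close by remarking that this also makes the statement well-posed despite the non-uniqueness of $D$: two Chern connections of $b$ differ by a $(1,0)$-form valued in $\Hom(T_X,\Ker b)$, so their torsions differ by a $\Ker b$-valued tensor, and the condition ``$\tau$ takes values in $\Ker b$'' is independent of the chosen connection.
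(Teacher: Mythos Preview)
Your proof is correct and follows essentially the same approach as the paper: both reduce to the identity $\partial\omega(\alpha,\beta,\ov\gamma) = c\,b(\tau(\alpha,\beta),\ov\gamma)$ via the intrinsic formula for the exterior derivative together with $D^{0,1}=\bar\partial$. The only cosmetic difference is that the paper works with general local holomorphic fields (so the bracket term $b([\alpha,\beta],\ov\gamma)$ appears and is absorbed directly into $\tau$), whereas you specialize to coordinate fields so the brackets vanish from the outset; your added remarks on well-posedness under change of Chern connection are a nice bonus not spelled out in the paper.
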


\begin{proof}
For holomorphic tangent fields we have
\[
\partial b (\alpha, \beta, \ov \gamma)
= \partial_\alpha b(\beta, \ov\gamma)
- \partial_\beta b(\alpha, \ov\gamma)
- b([\alpha,\beta], \ov\gamma)
= b(\tau(\alpha,\beta), \ov\gamma)
\]
and $\partial b = 0$ if and only if $\partial \omega = 0$.
\end{proof}

\subsection*{A word on notation}

In a lot of situations we care about, we now only have equality up to things that take values in the kernel of a Hermitian form. For example, if $f : E \to F$ is a Hermitian morphism, it admits an adjoint $f^\dagger$ and we have
\[
b_E(f^\dagger f s, \ov t)
= b_F(f s, \ov{f t})
= b_E(s, \ov t)
\]
for all sections $s$ and $t$,
so $f^\dagger f - \id_E \in \Hom(E, \Ker b_E)$. When working on vector spaces we could deal with this by quotienting out the kernel. If we try the same trick here, we run into the fact that the rank of the sheaf $\Ker b \to X$ may not be constant. We want to do differential geometry to the sections of this sheaf, so this is a problem.

This difficulty is a mirage as it can be taken care of by notation. We define an equivalence relation on sections by saying that two sections are equivalent if their difference is in the kernel of a Hermitian form of interest:

\begin{defi}
If $(E, b) \to X$ is a holomorphic vector bundle with a Hermitian form $b$, and $s, t \in \cc A^k(E)$, we write
\(
s \sim t
\)
if $s - t \in \cc A^k(\Ker b)$.
\end{defi}

In general it's problematic to define differential forms with values in a sheaf, but here it's hopefully clear what $\cc A^k(\Ker b)$ means as $\Ker b$ is a subsheaf of the sheaf of sections of a vector bundle.

Using this notation we can rephrase some of our earlier results on degenerate forms:
\begin{itemize}
\item
If $f : E \to F$ is a Hermitian morphism and $f^\dagger$ an adjoint, then $f^\dagger f \sim \id_E$.

\item
If $f : E \to F$ is a morphism and $f_1^\dagger$ and $f_2^\dagger$ are adjoints of $f$, then $f_1^\dagger \sim f_2^\dagger$.

\item
If $f : E \to F$ is a morphism that admits an adjoint, then $D_{\Hom(E,F)}$ has adjoints $(D_{\Hom(E,F)}f)^\dagger \sim D_{\Hom(F,E)}f^\dagger$.

\item
If $D$ is a Chern connection of $(E,b)$ and $s \sim t$ then $Ds \sim Dt$.

\item
If $D_1$ and $D_2$ are Chern connections of $(E,b)$, then $D_1s \sim D_2s$ for all sections $s$.
\end{itemize}
As adjoints and connections behave well with respect to this relation, we can prove the result we want by essentially the same arguments as in the case where the Hermitian forms are positive-definite; we simply replace ``$=$'' in those proofs by ``$\sim$'' and note that everything else works as before.

An example of this notation's usefulness is the following result, which is annoying to state and use without it. For Hermitian metrics, the $(2,0)$-part of the curvature form is zero. For our Chern connections, the same is not necessarily true; we can't say what happens in the kernel of the form.

\begin{prop}
Let $(E, b) \to X$ be a Hermitian bundle, and let $D$ be a Chern connection for $b$. Then $(D')^2 \sim 0$.
\end{prop}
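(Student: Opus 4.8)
The plan is to mimic the classical proof that the curvature of a Chern connection has no $(2,0)$-part, but to read off the vanishing only after pairing with $b$, so that the conclusion weakens from ``$(D')^2 = 0$'' to ``$(D')^2 \sim 0$''. The engine is the compatibility condition, differentiated once.

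First I would extend the compatibility relation $d\,b(s,\ov t) = b(Ds, \ov t) + b(s, \ov{Dt})$ from sections to $E$-valued forms. For $\phi \in \cc A^p(E)$ and $\psi \in \cc A^q(E)$ one has the graded Leibniz rule
\[
d\,b(\phi, \ov\psi) = b(D\phi, \ov\psi) + (-1)^p b(\phi, \ov{D\psi}),
\]
which follows formally from the definition of $b$ on $E$-valued forms together with the Leibniz rule for the wedge product, and which holds whether or not $b$ is degenerate. Since the compatibility relation holds for all smooth sections $s, t$, I may apply $d$ to it a second time; using $d^2 = 0$ the two mixed terms $\pm b(Ds, \ov{Dt})$ cancel, and I obtain
\[
b(D^2 s, \ov t) + b(s, \ov{D^2 t}) = 0
\]
for all smooth sections $s$ and $t$. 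This is the degenerate analogue of the anti-Hermitian symmetry of the curvature.

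Next I would decompose this identity by bidegree. Write $\Theta = D^2$ for the $\End E$-valued curvature $2$-form, with $\Theta^{2,0} = (D')^2$, $\Theta^{1,1} = D'\bar\partial + \bar\partial D'$ and $\Theta^{0,2} = (D'')^2 = \bar\partial^2 = 0$, the last vanishing because $D^{0,1} = \bar\partial$ is the integrable Dolbeault operator of the holomorphic structure. In $b(\Theta s, \ov t)$ the form part comes from $\Theta$, so its $(2,0)$-component is $b((D')^2 s, \ov t)$. In $b(s, \ov{\Theta t})$ the form part is conjugated, so its $(2,0)$-component comes from $\ov{\Theta^{0,2} t} = 0$. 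Extracting the $(2,0)$-part of the identity therefore leaves
\[
b\bigl((D')^2 s, \ov t\bigr) = 0
\]
for all sections $s, t$, which says exactly that $(D')^2 s$ takes values in $\Ker b$ for every $s$, i.e.\ $(D')^2 \sim 0$.

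I expect the only delicate points to be bookkeeping rather than substance: pinning down the sign $(-1)^p$ in the graded Leibniz rule, and correctly tracking how conjugation swaps the $(2,0)$- and $(0,2)$-parts, so that it is precisely the vanishing of $\Theta^{0,2}$ that removes the second term. Nothing in this argument uses non-degeneracy of $b$; the degenerate case differs from the classical one only in that we cannot strip off $b$ to conclude $(D')^2 = 0$ outright, but must be content with the relation $\sim$ obtained after pairing against $b$.
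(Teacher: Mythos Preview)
Your proof is correct and is essentially the paper's argument with the bidegree bookkeeping done in a different order: the paper first extracts the $(1,0)$-part of the compatibility relation to get $\partial b(s,\ov t) = b(D's,\ov t) + b(s,\ov{\bar\partial t})$ and then applies $\partial$ once more, whereas you apply $d$ twice and extract the $(2,0)$-component at the end. In both cases the mixed terms cancel and the remaining term vanishes because $\bar\partial^2 = 0$.
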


\begin{proof}
We have
\[
\partial b(s, \ov t)
= b(D's, \ov t) + b(s, \ov{\bar\partial t})
\]
so
\[
0 = \partial^2 b(s, \ov t)
= b((D')^2s, \ov t)
- b(D's, \ov{\bar\partial t})
+ b(D's, \ov{\bar\partial t})
+ b(s, \ov{\bar\partial^2 t})
= b((D')^2s, \ov t)
\]
for all sections $s$ and $t$.
\end{proof}

\subsection*{Sub- and quotient bundles}

Let
\[
\begin{tikzcd}
0 \ar[r] &
S \ar[r,"j"] &
E \ar[r,"q"] &
Q \ar[r] &
0
\end{tikzcd}
\]
be a short exact sequence of holomorphic bundles over $X$, and let $b_E$ be a smooth Hermitian form on $E$. It induces Hermitian forms $b_S$ and $b_Q$ on $S$ and $Q$ as we have seen. Both morphisms $j$ and $q$ also admit adjoints $j^\dagger: E \to S$ and $q^\dagger : Q \to E$. We denote Chern connections of $S$, $E$ and $Q$ by $D_S$, $D_E$ and $D_Q$.

\begin{defi}
The \emph{second fundamental form} of $S$ in $E$ is
\[
\snd(s) := q(D_E(js) - jD_S(s)).
\]
\end{defi}

\begin{prop}
The second fundamental form is an element of $\cc A^{1,0}(\Hom(S,Q))$.
We have
\[
\snd(s)
= q(D_{\Hom(S,E)}j(s))
= q(D_E(js))
\]
for sections $s$ of $S$.
\end{prop}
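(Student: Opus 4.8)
The plan is to recognize the bracketed expression $D_E(js) - jD_S(s)$ in the definition of $\snd$ as the covariant derivative of $j$, viewed as a section of the holomorphic bundle $\Hom(S,E)$, under the connection $D_{\Hom(S,E)}$ induced by the Chern connections $D_S$ and $D_E$. Once this identification is made, both displayed equalities and the two asserted properties of $\snd$ follow quickly, and the degeneracy of $b_E$ never enters, since the induced connection on $\Hom(S,E)$ is built from $D_S$ and $D_E$ alone without reference to any form.

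First I would invoke the Leibniz rule for the induced connection: for a section $s$ of $S$,
\[
(D_{\Hom(S,E)}j)(s) = D_E(js) - j(D_S s).
\]
The right-hand side is precisely the argument of $q$ in the definition of $\snd$, so $\snd(s) = q\bigl((D_{\Hom(S,E)}j)(s)\bigr)$, which is the first claimed equality. For the second, I would use exactness of the sequence: since $q \circ j = 0$ as a bundle map, we have $q(j(D_S s)) = 0$, and therefore $\snd(s) = q(D_E(js)) - q(j(D_S s)) = q(D_E(js))$.

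It remains to check that $\snd$ is an honest element of $\cc A^{1,0}(\Hom(S,Q))$. For tensoriality I would note that $D_{\Hom(S,E)}j$ is the covariant derivative of a fixed section of $\Hom(S,E)$ and hence a one-form with values in $\Hom(S,E)$; post-composing with the bundle map $q$ yields a one-form with values in $\Hom(S,Q)$, so $\snd$ is $\cc C^\infty$-linear in $s$ rather than merely additive. (Alternatively one checks $\snd(fs) = f\snd(s)$ directly, as the $df$-terms arising from $D_E(j(fs))$ and $j(D_S(fs))$ cancel.) For the type, I would use that $j$ is a \emph{holomorphic} bundle map, hence a holomorphic section of $\Hom(S,E)$, so $\bar\partial j = 0$; because $D_S$ and $D_E$ both have $(0,1)$-part $\bar\partial$, so does the induced connection on $\Hom(S,E)$, whence $(D_{\Hom(S,E)}j)^{0,1} = \bar\partial j = 0$. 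Thus $D_{\Hom(S,E)}j$, and therefore $\snd$, is of type $(1,0)$.

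I expect no serious obstacle: the argument is essentially bookkeeping around the induced connection. The one point that deserves care is lining up the two facts that $j$ holomorphic gives $\bar\partial j = 0$ as a section of $\Hom(S,E)$ and that the induced connection inherits $D^{0,1} = \bar\partial$ from $D_S$ and $D_E$; together these make the vanishing of the $(0,1)$-part automatic. It is also worth emphasizing that nothing here uses positivity or non-degeneracy of $b_E$, which is the whole point of working with Chern connections in the degenerate setting.
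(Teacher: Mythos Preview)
Your proposal is correct and follows essentially the same argument as the paper: both use the Leibniz rule $(D_{\Hom(S,E)}j)(s) = D_E(js) - j(D_S s)$ together with $q\circ j = 0$ for the two displayed equalities, and both derive the type $(1,0)$ from $\bar\partial j = 0$ since $j$ is holomorphic. The only cosmetic difference is that the paper verifies $\cc C^\infty$-linearity in $s$ by a direct computation with $D_E(j(fs))$, whereas you deduce it (more efficiently) from the fact that $D_{\Hom(S,E)}j$ is already a $\Hom(S,E)$-valued one-form.
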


\begin{proof}
As $qj = 0$ it is clear that $b(s) = q(D_E(js))$. We also have
\[
D_E(js) = D_{\Hom(S,E)}(j)(s) + j(D_Ss),
\]
so $b(s) = q(D_{\Hom(S,E)}(j)(s))$.

As $j$ is holomorphic, we have $D_{\Hom(S,E)}j = D'_{\Hom(S,E)}j$ so $\snd$ has no $(0,1)$-part. It is also clearly $\cc C^\infty$-linear in its tensor field variable. If $f$ is a smooth function, we have
\[
D_E(j(fs))
= df \otimes js + f D_E(js)
\]
so $\snd(fs) = q(D_E(j(fs))) = q(f D_E(js)) = f\snd(s)$.
\end{proof}

\begin{prop}
The second fundamental form admits an adjoint.
\end{prop}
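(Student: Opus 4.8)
The plan is to reduce the statement to the adjointability criterion from the linear algebra section: a morphism admits an adjoint precisely when it carries the kernel of the source form into the kernel of the target form. So I will show that $\snd$ sends sections of $\Ker b_S$ to forms with values in $\Ker b_Q$, and then invoke the adjoint existence theorem. The key is to exploit the factorization already recorded in the previous proposition,
\[
\snd(s) = q\bigl(D_{\Hom(S,E)}j(s)\bigr),
\]
which writes $\snd$ as the composite of the connection-derivative of $j$ with the quotient map $q$, both of which are objects we already control.

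First I would note that $j$ admits an adjoint: this is exactly the proposition on pulled-back forms, applied to $b_S = j^*b_E$. Consequently the proposition on Chern connections of $\Hom$-bundles applies to $j$ and shows that $D_{\Hom(S,E)}j$ admits an adjoint; concretely, its proof gives that for every section $s$ of $\Ker b_S$ the form $D_{\Hom(S,E)}j(s)$ takes values in $\Ker b_E$, i.e. lies in $\cc A^1(\Ker b_E)$. Next I would use that $q$ admits an adjoint, which yields $q(\Ker b_E) \subset \Ker b_Q$. Composing the two inclusions gives, for every section $s$ of $\Ker b_S$,
\[
\snd(s) = q\bigl(D_{\Hom(S,E)}j(s)\bigr) \in \cc A^1(\Ker b_Q),
\]
so $\snd$ carries $\Ker b_S$ into $\Ker b_Q$. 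By the adjoint existence theorem this is exactly the condition that guarantees an adjoint, and an explicit one should be given (up to the relation $\sim$) by $\bigl(D_{\Hom(S,E)}j\bigr)^\dagger \circ q^\dagger$.

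The only delicate point is formal rather than mathematical: $\snd$ is a $\Hom(S,Q)$-valued $(1,0)$-form, not a single linear map, so ``admits an adjoint'' has to be read in the form-valued sense used throughout this section, namely that it carries sections of $\Ker b_S$ to $\Ker b_Q$-valued forms. Once this reading is granted the argument is just a two-line chase through kernels; all the substance sits in the two cited propositions, and no genuinely new computation is required.

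If one prefers to avoid the factorization, an equivalent hands-on route computes $b_Q(\snd(s), \ov v)$ directly. Writing $v = qy$ with $y$ a section of $(\Ker q)^\perp$ and using the identity $b_Q(qx, \ov{qy}) = b_E(x, \ov y)$ reduces this to $b_E(D_E(js), \ov y)$; compatibility of $D_E$ with $b_E$ then expresses this through $d\,b_E(js,\ov y)$ and $b_E(js, \ov{D_E y})$, and the fact that $js$ lands in $\Ker b_E$ (because $j(\Ker b_S) = j(S)\cap\Ker b_E$ by the intersection formula $S\cap S^\perp = S\cap \Ker b$) makes every term vanish. This is more work than the composition argument, so I expect the factorization route to be the cleanest.
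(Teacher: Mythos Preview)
Your proposal is correct and follows the same overall strategy as the paper: reduce to the kernel criterion and chase $\Ker b_S \to \Ker b_E \to \Ker b_Q$. The one difference is which expression for $\snd$ you unwind and which earlier proposition does the work. You use $\snd(s) = q(D_{\Hom(S,E)}j(s))$ and invoke the proposition that $D_{\Hom(E,F)}f$ admits an adjoint whenever $f$ does, extracting from its proof that $D_{\Hom(S,E)}j$ sends $\Ker b_S$ into $\Ker b_E$. The paper instead uses the simpler expression $\snd(s) = q(D_E(js))$: since $j(\Ker b_S) \subset \Ker b_E$ and a Chern connection preserves the kernel of its form, $D_E(js)$ already lands in $\Ker b_E$, and then $q$ carries it to $\Ker b_Q$. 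The paper's route is marginally more direct because it avoids the $\Hom$-bundle machinery and appeals only to the elementary ``$D$ preserves $\Ker b$'' fact; your route is a perfectly good alternative and has the virtue of exhibiting an explicit adjoint $\snd^\dagger \sim (D_{\Hom(S,E)}j)^\dagger \circ q^\dagger$ along the way. Your ``hands-on'' alternative is indeed more laborious than either and is not needed.
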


\begin{proof}
We have to show that $\snd(s) \in \Ker b_Q$ when $s$ is a section of $\Ker b_S$. If $s$ is such a section, then $js$ is a section of $\Ker b_E$, so $D_E(js)$ is a section of $\Ker b_E$. Then $q(D_E(js))$ is a section of $\Ker b_Q$ by the construction of $b_Q$.
\end{proof}

Our goal is to calculate the curvature forms of the sub- and quotient bundles. The first stop on the way is a wildly useful list of formulas from Demailly~{{\cite[Theorem~14.3]{demailly-complex}}}.

\begin{prop}
\label{prop:seq-formulas}
\begin{alignat*}{2}
D'_{\Hom(S,E)}j &\sim q^\dagger \circ \snd,
&
\qquad
\bar\partial j &= 0,
\\
D'_{\Hom(E, Q)} q &\sim - \snd \circ j^\dagger,
&
\bar\partial q &= 0,
\\
D'_{\Hom(E,S)} j^\dagger &\sim 0,
&
\bar\partial j^\dagger &\sim b^\dagger \circ q,
\\
D'_{\Hom(Q,E)} q^\dagger &\sim 0,
&
\bar\partial q^\dagger &\sim - j \circ \snd^\dagger,
\\
D'_{\Hom(S,Q)} \snd &\sim 0,
&
\bar\partial \snd^\dagger &\sim 0.
\end{alignat*}
\end{prop}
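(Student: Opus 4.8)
The plan is to prove these as the degenerate incarnations of Demailly's structural equations for a short exact sequence, following the recipe advertised in the discussion of the $\sim$ relation: carry out the classical second fundamental form computations verbatim, replacing every ``$=$'' by ``$\sim$''. The legitimacy of this substitution rests on the facts already established, namely that $\sim$ is preserved by the Chern connections ($s \sim t$ implies $Ds \sim Dt$) and by passing to adjoints, where the adjoint conjugates the form part and so exchanges the bidegrees $(p,q)$ and $(q,p)$. The algebraic toolkit I would use throughout consists of the Leibniz rule $D_{\Hom(S,E)}(j)(s) = D_E(js) - jD_S(s)$ for induced connections on Hom bundles, the holomorphy relations $\bar\partial j = \bar\partial q = 0$, the adjoint identities $j^\dagger j \sim \id_S$, $qq^\dagger \sim \id_Q$, $j^\dagger q^\dagger \sim 0$ and $qj = 0$, and the completeness relation $jj^\dagger + q^\dagger q \sim \id_E$. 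This last relation encodes the orthogonal splitting $E = j(S)\oplus S^\perp$ modulo $\Ker b_E$; I would verify it by pairing against $b_E$, so that $jj^\dagger$ and $q^\dagger q$ become the orthogonal projectors onto $j(S)$ and $S^\perp$, which amounts to the propositions $S + S^\perp = V$ and $S \cap S^\perp = S \cap \Ker b$ of the linear algebra section applied fibrewise.

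I would first establish the two left-column identities in the top two rows, from which everything else follows. Since $j$ is holomorphic, $D_{\Hom(S,E)}j = D'_{\Hom(S,E)}j$ and, by definition, $\snd = q\circ D_{\Hom(S,E)}j$. Pairing $D_{\Hom(S,E)}j(s) = D_E(js) - jD_S s$ against $jt$ for holomorphic $t$, and using the compatibility of $D_E$ with $b_E$ and of $D_S$ with $b_S = j^*b_E$ together with $b_E(ja,\ov{jb}) = b_S(a,\ov b)$, shows $b_E(D_{\Hom(S,E)}j(s), \ov{jt}) = 0$, that is, $j^\dagger D_{\Hom(S,E)}j \sim 0$; applying the completeness relation then gives $D_{\Hom(S,E)}j \sim q^\dagger q\, D_{\Hom(S,E)}j = q^\dagger\snd$, which is the first identity. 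For the second, differentiating $qj = 0$ with the Leibniz rule yields $(D_{\Hom(E,Q)}q)\circ j = -\snd$; post-composing with $j^\dagger$ and invoking completeness reduces the claim $D'_{\Hom(E,Q)}q \sim -\snd\circ j^\dagger$ to the auxiliary fact $(D_{\Hom(E,Q)}q)\circ q^\dagger \sim 0$. I would prove this last fact directly: applying $\bar\partial$ to $qq^\dagger \sim \id_Q$ and using $\bar\partial q = 0$ shows that $\bar\partial q^\dagger$ takes values in $\Ker q = \im j$ modulo $\Ker b_E$, and since $\im q^\dagger \subset S^\perp = (\im j)^\perp$ a compatibility computation then gives $q D'_E q^\dagger \sim D'_Q$, that is, $(D_{\Hom(E,Q)}q)q^\dagger \sim 0$. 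The holomorphy statements $\bar\partial j = \bar\partial q = 0$ are immediate.

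The two adjoint rows are then pure formalism. Taking the adjoint of the first identity and using the proposition that $(D_{\Hom(S,E)}j)^\dagger \sim D_{\Hom(E,S)}j^\dagger$ — with the adjoint conjugating the $(1,0)$ form part into a $(0,1)$ one — I read off the bidegree components: the $(1,0)$-part gives $D'_{\Hom(E,S)}j^\dagger \sim 0$, and the $(0,1)$-part gives $\bar\partial j^\dagger \sim (q^\dagger\snd)^\dagger \sim \snd^\dagger q$, using $(q^\dagger)^\dagger \sim q$ for the double adjoint. The same maneuver applied to the second identity produces $D'_{\Hom(Q,E)}q^\dagger \sim 0$ and $\bar\partial q^\dagger \sim -j\circ\snd^\dagger$. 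Finally, for the last row, I would differentiate the first identity once more: the proposition $(D')^2 \sim 0$ gives $(D')^2_{\Hom(S,E)}j \sim 0$ on the left, while the Leibniz rule together with the freshly proved $D'_{\Hom(Q,E)}q^\dagger \sim 0$ collapses the right-hand side to $q^\dagger D'_{\Hom(S,Q)}\snd$; composing with $q$ and using $qq^\dagger \sim \id_Q$ yields $D'_{\Hom(S,Q)}\snd \sim 0$, and taking its adjoint gives $\bar\partial\snd^\dagger \sim 0$.

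The main obstacle is bookkeeping rather than any single hard estimate: every step mimics a classical computation, but one must continually track what ``vanishing modulo the kernel'' means after composing morphisms, differentiating, and taking adjoints across bidegrees, and confirm that $\sim$ is stable under each operation. The one genuinely delicate point is avoiding circularity in the second row, where the naive proof of $D'_{\Hom(E,Q)}q \sim -\snd\circ j^\dagger$ wants to invoke $D'_{\Hom(Q,E)}q^\dagger \sim 0$ — which is itself obtained by taking the adjoint of that very row. Establishing the auxiliary identity $(D_{\Hom(E,Q)}q)\circ q^\dagger \sim 0$ independently, through the behavior of $\bar\partial q^\dagger$, is what breaks the cycle and lets the remaining identities be derived in order.
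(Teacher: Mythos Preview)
Your proof is correct and follows the same overall strategy as the paper --- prove the first line, derive the adjoint lines formally, and get the last line by differentiating once more and invoking $(D')^2 \sim 0$.

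The one point worth flagging is your treatment of the second line. You worry that invoking $D'_{\Hom(Q,E)}q^\dagger \sim 0$ in the proof of $D'_{\Hom(E,Q)}q \sim -\snd\circ j^\dagger$ would be circular, and you therefore supply a separate compatibility computation to establish $(D'_{\Hom(E,Q)}q)\circ q^\dagger \sim 0$ directly. That detour is valid but unnecessary: the circularity is illusory. The identity $D'_{\Hom(Q,E)}q^\dagger \sim 0$ is the adjoint of the trivial fact $\bar\partial q = 0$ (the adjoint swaps bidegrees), not of the nontrivial identity $D'_{\Hom(E,Q)}q \sim -\snd\circ j^\dagger$; the latter's adjoint is $\bar\partial q^\dagger \sim -j\circ\snd^\dagger$. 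This is exactly how the paper proceeds: it records $D'_{\Hom(Q,E)}q^\dagger \sim 0$ immediately from the holomorphy of $q$, then differentiates the completeness relation $\id_E \sim jj^\dagger + q^\dagger q$ to obtain the second line. Your auxiliary argument via $\bar\partial(qq^\dagger)\sim 0$ and the range of $\bar\partial q^\dagger$ reaches the same destination by a longer road.
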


\begin{proof}
Let $s$ be a section of $S$. Then
\[
j(D_S s) + q^\dagger \snd(s)
\sim D_E(js)
= D_{\Hom(S,E)}j (s) + j(D_S s),
\]
where the first relation is by definition of $D_S$ and $\snd$, so
\[
D_{\Hom(S,E)}j \sim q^\dagger \circ \snd.
\]
The morphism $j$ is holomorphic, so $\bar\partial j = 0$. This proves the first line. The third line follows from the first by taking adjoints.

For the second line, the morphism $q$ is also holomorphic, so $\bar\partial q = 0$. Taking adjoints, we get that $D'_{\Hom(Q,E)} q^\dagger \sim 0$.
We have $\id_E \sim j \circ j^\dagger + q^\dagger \circ q$, so
\begin{align*}
0
&= D'_{\End E} \id_E
\\
&\sim D'_{\Hom(S,E)} j \circ j^\dagger + j \circ D'_{\Hom(E,S)}j^\dagger
+ D'_{\Hom(Q,E)}q^\dagger \circ q + q^\dagger D'_{\Hom(E,Q)} q
\\
&\sim q^\dagger \circ \snd \circ j^\dagger + q^\dagger \circ D'_{\Hom(E,Q)}q.
\end{align*}
Applying $q$, we conclude that $D'_{\Hom(E,Q)}q \sim - \snd \circ j^\dagger$. This proves the second line. Taking adjoints we get the last piece of the fourth line.

Finally we note that
\begin{align*}
D'_{\Hom(S,Q)} \snd
&\sim D'_{\Hom(S,Q)} (q D'_{\Hom(S,E)}j)
\\
&\sim - \snd \circ j^\dagger \circ q^\dagger \circ \snd
+ q^\dagger (D'_{\Hom(S,Q)})^2 j
\sim 0
\end{align*}
as we're dealing with curvature tensors of Hermitian forms which are of type $(1,1)$ up to things that are $\sim 0$. The statement about $\bar\partial \snd^\dagger$ follows by taking adjoints.
\end{proof}

\begin{prop}
Under the smooth splitting $E \to S \oplus Q$ defined by $s \mapsto j^\dagger s \oplus qs$ the Chern connection and curvature form of $E$ are
\begin{align*}
D_E s &\sim
\begin{pmatrix}
j^\dagger & q
\end{pmatrix}^\dagger
\begin{pmatrix}
D_S & - \snd^\dagger
\\
\snd & D_Q
\end{pmatrix}
\begin{pmatrix}
j^\dagger \\ q
\end{pmatrix}
(s),
\\
D_E^2 s &\sim
\begin{pmatrix}
j^\dagger & q
\end{pmatrix}^\dagger
\begin{pmatrix}
D^2_S - \snd^\dagger \wedge \snd & -D'_{\Hom(Q,S)} \snd^\dagger
\\
\bar\partial \snd & D^2_Q - \snd \wedge \snd^\dagger
\end{pmatrix}
\begin{pmatrix}
  j^\dagger \\ q
\end{pmatrix}(s).
\end{align*}
\end{prop}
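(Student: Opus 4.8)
The plan is to transport the Chern connection $D_E$ through the smooth splitting and then read off the curvature as the square of the resulting block connection on $S \oplus Q$. Write $u = \begin{pmatrix} j^\dagger \\ q \end{pmatrix} : E \to S \oplus Q$ for the splitting map. Its adjoint is $u^\dagger \sim \begin{pmatrix} j & q^\dagger \end{pmatrix}$, since the double adjoint of $j$ is $\sim j$; and because $j$ and $q$ are Hermitian morphisms with $qj = 0$, a direct check gives $j^\dagger j \sim \id_S$, $qq^\dagger \sim \id_Q$, $qj = 0$ and $j^\dagger q^\dagger = (qj)^\dagger \sim 0$, so that $u^\dagger u \sim \id_E$ and $u u^\dagger \sim \id_{S \oplus Q}$. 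Thus $u$ is an isometry up to $\sim$ in both directions, which is what lets us pass freely between $E$ and $S \oplus Q$.

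First I would establish the connection formula. Using $\id_E \sim jj^\dagger + q^\dagger q$, I write $s \sim j(j^\dagger s) + q^\dagger(qs)$ and differentiate with $D_E$, applying the Leibniz rule on the $\Hom$-bundles:
\[
D_E s \sim (D_{\Hom(S,E)} j)(j^\dagger s) + j\, D_S(j^\dagger s) + (D_{\Hom(Q,E)} q^\dagger)(qs) + q^\dagger D_Q(qs).
\]
By Proposition~\ref{prop:seq-formulas} and $\bar\partial j = 0$ we have $D_{\Hom(S,E)} j \sim q^\dagger \circ \snd$, while combining $D'_{\Hom(Q,E)} q^\dagger \sim 0$ with $\bar\partial q^\dagger \sim -j \circ \snd^\dagger$ gives $D_{\Hom(Q,E)} q^\dagger \sim -j \circ \snd^\dagger$. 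Substituting and regrouping along $j$ and $q^\dagger$ yields $D_E s \sim u^\dagger \mathcal{D}\, u (s)$ with the block connection $\mathcal{D} = \begin{pmatrix} D_S & -\snd^\dagger \\ \snd & D_Q \end{pmatrix}$ on $S \oplus Q$, which is the first display.

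For the curvature, the key observation is that the connection relation above says $u \circ D_E \sim \mathcal{D} \circ u$; since the isometry is tensorial this intertwining persists on bundle-valued forms, so $u D_E^2 s \sim \mathcal{D}^2 u s$ and hence $D_E^2 s \sim u^\dagger \mathcal{D}^2 u(s)$. It then remains to compute $\mathcal{D}^2$ as the curvature of the block connection. Writing $\mathcal{D}$ as the diagonal connection plus the off-diagonal endomorphism-valued one-form $\begin{pmatrix} 0 & -\snd^\dagger \\ \snd & 0 \end{pmatrix}$ and expanding, the diagonal entries produce $D_S^2 - \snd^\dagger \wedge \snd$ and $D_Q^2 - \snd \wedge \snd^\dagger$, while the off-diagonal entries are the covariant derivatives of $\snd$ and $\snd^\dagger$. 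Proposition~\ref{prop:seq-formulas} collapses these: $D'_{\Hom(S,Q)} \snd \sim 0$ leaves $D_{\Hom(S,Q)} \snd \sim \bar\partial \snd$ in the lower-left, and $\bar\partial \snd^\dagger \sim 0$ leaves $D_{\Hom(Q,S)} \snd^\dagger \sim D'_{\Hom(Q,S)} \snd^\dagger$ in the upper-right. This reproduces the second display.

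The main obstacle is bookkeeping rather than ideas. I must keep the Leibniz-rule signs for one-form-valued morphisms straight, so that the cross terms cancel correctly and the wedge terms in the diagonal curvature entries land with the right sign; and I must justify that the intertwining $u \circ D_E \sim \mathcal{D} \circ u$ really does transport to $D_E^2$, i.e. that applying $D_E$ preserves $\sim$ (which it does by the bullet list preceding the statement) and that $uu^\dagger \sim \id$ and $u^\dagger u \sim \id$ may be applied to form-valued sections and not just honest sections. Once these $\sim$-manipulations are justified, both displays fall out of Proposition~\ref{prop:seq-formulas} with no further computation.
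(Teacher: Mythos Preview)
Your proof is correct. For the first display you do exactly what the paper does: decompose $s \sim j(j^\dagger s) + q^\dagger(qs)$, apply $D_E$ via the Leibniz rule, and invoke Proposition~\ref{prop:seq-formulas} to identify $D_{\Hom(S,E)}j \sim q^\dagger\snd$ and $D_{\Hom(Q,E)}q^\dagger \sim -j\snd^\dagger$.

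For the curvature display the two arguments diverge. The paper stays on $E$: it applies $D_E$ directly to each of the four summands $q^\dagger\snd(j^\dagger s)$, $jD_S(j^\dagger s)$, $-j\snd^\dagger(qs)$, $q^\dagger D_Q(qs)$, again uses Proposition~\ref{prop:seq-formulas} on every $D_E j$, $D_E q^\dagger$, etc., and then collects by pre- and post-factors. You instead pass to $S \oplus Q$ via the intertwining $u D_E \sim \mathcal{D} u$, deduce $D_E^2 \sim u^\dagger \mathcal{D}^2 u$, and square the block connection $\mathcal{D} = D_0 + A$ so that $\mathcal{D}^2 = D_0^2 + D_{\End}A + A\wedge A$; Proposition~\ref{prop:seq-formulas} then only enters at the end to simplify the off-diagonal $D_{\End}A$. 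Your route is a bit more structural and avoids the term-by-term expansion, at the cost of the extra justification you flag: that $\mathcal{D}$ preserves $\sim$ (it does, since $\snd$ and $\snd^\dagger$ both send kernel to kernel) and that the intertwining extends to form-valued sections (it does, as $u$ is $\cc C^\infty$-linear). The paper's route is more pedestrian but needs no such side arguments.
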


\begin{proof}
Let $s$ be a section of $E$. We write
\[
s \sim j (j^\dagger s) + q^\dagger( qs).
\]
Then
\[
D_E s
\sim q^\dagger \circ \snd (j^\dagger s)
+ j \circ D_S( j^\dagger s)
- j \circ \snd^\dagger (qs)
+ q^\dagger \circ D_Q (qs).
\]
This proves the first statement.
For each of the terms here, we have
\begin{align*}
D_E(q^\dagger \circ \snd (j^\dagger s))
&\sim -j \circ \snd^\dagger \circ \snd (j^\dagger s)
+ q^\dagger \circ \bar\partial \snd (j^\dagger s)
- q^\dagger \circ \snd \circ D_S(j^\dagger s),
\\
D_E(j \circ D_S( j^\dagger s))
&\sim q^\dagger \circ \snd \circ D_S(j^\dagger s)
+ j \circ D_S^2 (j^\dagger s),
\\
D_E(-j \circ \snd^\dagger (qs))
&\sim - q^\dagger \circ \snd \circ \snd^\dagger (qs)
- j \circ D'_{\Hom(Q,S)}\snd^\dagger (qs)
+ j \circ \snd^\dagger \circ D_Q(qs),
\\
D_E(q^\dagger \circ D_Q (qs))
&\sim -j \circ \snd^\dagger \circ D_Q(qs)
+ q^\dagger \circ D_Q^2 (qs).
\end{align*}
Grouping these together by pre- and postfix morphisms, we get
\begin{align*}
D_E^2 s
&\sim j \bigl( D_S^2 - \snd^\dagger \circ \snd \bigr) j^\dagger s
- j \bigl( D'_{\Hom(Q,S)} \snd^\dagger \bigr) qs
\\
&\qquad
+ q^\dagger \bigl( \bar\partial \snd \bigr) j^\dagger s
+ q^\dagger \bigl( D^2_Q - \snd \circ \snd^\dagger \bigr) qs
\\
&=
\begin{pmatrix}
j^\dagger & q
\end{pmatrix}^\dagger
\begin{pmatrix}
D^2_S - \snd^\dagger \wedge \snd & -D'_{\Hom(Q,S)} \snd^\dagger
\\
\bar\partial \snd & D^2_Q - \snd \wedge \snd^\dagger
\end{pmatrix}
\begin{pmatrix}
  j^\dagger \\ q
\end{pmatrix}(s),
\end{align*}
which completes the proof.
\end{proof}

\begin{coro}[Codazzi--Griffiths equations]
The curvature tensors of $S$ and $Q$ satisfy
\begin{align*}
R_S(\alpha, \ov\beta, s, \ov t)
&= R_E(\alpha, \ov\beta, js, \ov{jt})
- b_Q(\snd(\alpha, s), \ov{\snd(\beta, t)}),
\\
R_Q(\alpha, \ov\beta, s, \ov t)
&= R_E(\alpha, \ov\beta, q^\dagger s, \ov{q^\dagger t})
+ b_S(\snd^\dagger(\ov\beta, s), \ov{\snd^\dagger(\ov\alpha, t)}).
\end{align*}
\end{coro}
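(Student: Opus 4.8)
The plan is to read off both equations directly from the block-matrix description of $D_E^2$ in the preceding proposition, combined with the defining formula $R(\alpha,\ov\beta,s,\ov t) = b(\tfrac i2 D^2_{\alpha\ov\beta}s,\ov t)$. Everything reduces to feeding the correct section into the block matrix and using that $j$ and $q^\dagger$ interact cleanly with $b_E$ through their adjoints, so that only the diagonal blocks survive the pairing.

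For the first equation I would compute $R_E(\alpha,\ov\beta,js,\ov{jt}) = b_E(\tfrac i2 D_E^2(js),\ov{jt})$. The section $js$ has block coordinates $(j^\dagger js, q\,js) \sim (s,0)$, since $j^\dagger j \sim \id_S$ and $qj = 0$; applying the curvature block matrix to $(s,0)$ gives
\[
D_E^2(js) \sim j\bigl(D_S^2 - \snd^\dagger \wedge \snd\bigr)(s) + q^\dagger\bigl(\bar\partial\snd\bigr)(s).
\]
Pairing against $\ov{jt}$, the $q^\dagger$-term dies because $q^\dagger$ is adjoint to $q$ and $q(jt) = 0$, while the $j$-term collapses via $b_E(j\xi,\ov{j\eta}) = b_S(\xi,\ov\eta)$. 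This leaves $R_E(\alpha,\ov\beta,js,\ov{jt}) = R_S(\alpha,\ov\beta,s,\ov t) - b_S\bigl(\tfrac i2(\snd^\dagger \wedge \snd)_{\alpha\ov\beta}s,\ov t\bigr)$, so all the remaining content sits in that final term. The second equation is dual: here $q^\dagger s$ has block coordinates $(j^\dagger q^\dagger s, q q^\dagger s) \sim (0,s)$, using $j^\dagger q^\dagger \sim 0$ and $q q^\dagger \sim \id_Q$. Feeding $(0,s)$ through the block matrix and pairing against $\ov{q^\dagger t}$ — now the $j$-component is killed and the $q^\dagger$-component survives through $q q^\dagger \sim \id_Q$ — yields $R_E(\alpha,\ov\beta,q^\dagger s,\ov{q^\dagger t}) = R_Q(\alpha,\ov\beta,s,\ov t) - b_Q\bigl(\tfrac i2(\snd \wedge \snd^\dagger)_{\alpha\ov\beta}s,\ov t\bigr)$.

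The main obstacle, and the only genuine computation, is to match these wedge terms to the stated pointwise pairings, i.e.\ to establish
\[
b_S\bigl(\tfrac i2(\snd^\dagger \wedge \snd)_{\alpha\ov\beta}s,\ov t\bigr) = -\,b_Q\bigl(\snd(\alpha,s),\ov{\snd(\beta,t)}\bigr)
\]
and its analogue on $Q$. Since taking an adjoint conjugates the form part, $\snd$ is a $(1,0)$-form valued in $\Hom(S,Q)$ while $\snd^\dagger$ is a $(0,1)$-form valued in $\Hom(Q,S)$; writing $\snd = \sum_\mu \snd_\mu\, dz^\mu$ and $\snd^\dagger = \sum_\mu \snd_\mu^\dagger\, d\bar z^\mu$, the composite $\snd^\dagger \wedge \snd$ contributes $\snd_\mu^\dagger \snd_\nu$ against $d\bar z^\mu \wedge dz^\nu$. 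The pointwise adjoint relation $b_S(\snd_\mu^\dagger x,\ov y) = b_Q(x,\ov{\snd_\mu y})$ then converts the $b_S$-pairing of $\snd^\dagger \snd$ into the $b_Q$-pairing of $\snd$ with itself, which is the structural heart of the identity. The delicate part is purely bookkeeping: the overall sign must be extracted from the $d\bar z \wedge dz$ ordering, and the placement of $\alpha$ and $\ov\beta$ together with the factor $\tfrac i2$ has to be reconciled against the normalization implicit in $\snd(\alpha,s)$ and in the $(1,1)$-evaluation convention. Getting that constant and sign to agree with the displayed formulas is where I expect to spend the effort; the connection-level identities themselves follow mechanically from the previous proposition.
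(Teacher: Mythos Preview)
Your proposal is correct and follows exactly the approach the paper intends: the corollary is stated without proof immediately after the block-matrix proposition, and your argument is precisely the mechanical extraction of the diagonal blocks by feeding in $js$ (coordinates $(s,0)$) and $q^\dagger s$ (coordinates $(0,s)$) and using the adjunctions to kill the off-diagonal contributions. The only work is the sign-and-convention bookkeeping in unwinding $\tfrac i2(\snd^\dagger\wedge\snd)_{\alpha\ov\beta}$, which you have correctly identified and set up.
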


\begin{coro}
Let $E \to X$ be a holomorphic vector bundle with two Hermitian forms $b_1$ and $b_2$. Assume that $h := b_1 + b_2$ is positive-definite. The curvature tensor of $h$ is
$$
\displaylines{
R_h(\alpha, \ov\beta, s, \ov t)
= R_{b_1}(\alpha, \ov\beta, s, \ov t)
+ R_{b_2}(\alpha, \ov\beta, s, \ov t)
- q(\sigma(\alpha) s, \ov{\sigma(\beta) t}),
}
$$
where the second fundamental form is
$$
\sigma(\alpha) s = (D_{b_1,\alpha} - D_{b_2,\alpha}) s
$$
and the Hermitian form on the ``quotient'' bundle $E$ is
$$
q(s, \ov t)
= b_1(h^{-1}b_2 s, \ov{h^{-1}b_2 t}) + b_2(h^{-1}b_1 s, \ov{h^{-1}b_1 t}).
$$
\end{coro}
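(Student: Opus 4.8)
The plan is to apply the first Codazzi--Griffiths equation to the short exact sequence sketched in the introduction,
\[
\begin{tikzcd}
0 \ar[r] & E \ar[r,"j"] & E \oplus E \ar[r,"p"] & E \ar[r] & 0,
\end{tikzcd}
\]
where $j(s) = (s,s)$ is the diagonal and $p(a,b) = a-b$, so that $\Ker p = \im j$. I equip the middle term with the form $b_E := b_1 \oplus b_2$ and set $D_E := D_{b_1} \oplus D_{b_2}$; one checks directly that a sum of Chern connections is compatible with $b_1 \oplus b_2$ and has $(0,1)$-part $\bar\partial$, so this is a Chern connection of $b_E$. Pulling back along $j$ gives $j^* b_E = b_1 + b_2 = h$, so the induced form on the sub-bundle $S = E$ is exactly $h$; as $h$ is positive-definite, $D_S$ is its ordinary Chern connection. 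The induced form on the quotient $Q = E$ I will write $b_Q$ and identify at the end with the form called $q$ in the statement.

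Two of the three ingredients are then immediate. The second fundamental form is $\snd(s) = p(D_E(js)) = p(D_{b_1}s, D_{b_2}s) = D_{b_1}s - D_{b_2}s = \sigma(s)$, matching the statement. And since the curvature of a direct sum is block-diagonal, $R_E(\alpha, \ov\beta, js, \ov{jt}) = R_{b_1}(\alpha, \ov\beta, s, \ov t) + R_{b_2}(\alpha, \ov\beta, s, \ov t)$, which will produce the first two terms of the formula.

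The real work is computing $b_Q$. By the construction in the theorem on quotient forms, $b_Q(pu, \ov{pv}) = b_E(x, \ov y)$ for lifts $x,y$ of $pu, pv$ lying in $S^\perp = (\im j)^\perp$. I would compute $S^\perp$ explicitly: viewing $b_1, b_2, h$ as maps $E \to \ov E^*$, one has $(a,b) \in S^\perp$ if and only if $b_1 a + b_2 b = 0$. Solving this together with $p(a,b) = u$ yields the canonical lift $p^\dagger u = (h^{-1}b_2\, u, -h^{-1}b_1\, u)$, and substituting gives precisely
\[
b_Q(u, \ov v) = b_1(h^{-1}b_2\, u, \ov{h^{-1}b_2\, v}) + b_2(h^{-1}b_1\, u, \ov{h^{-1}b_1\, v}).
\]
With $b_Q$ in hand, feeding $R_S = R_h$, $\snd = \sigma$ and the block-diagonal $R_E$ into the Codazzi--Griffiths equation $R_S(\alpha,\ov\beta,s,\ov t) = R_E(\alpha,\ov\beta,js,\ov{jt}) - b_Q(\snd(\alpha,s), \ov{\snd(\beta,t)})$ gives the claimed identity.

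I expect the quotient-form computation to be the main obstacle: pinning down $S^\perp$ and the canonical lift $p^\dagger$, and verifying both that $p^\dagger u$ genuinely lands in $S^\perp$ and that $p\, p^\dagger = \id$. Each of these rests on the algebraic identity $b_1 h^{-1} b_2 = b_1 - b_1 h^{-1} b_1 = b_2 h^{-1} b_1$, which follows by writing $b_2 = h - b_1$ and using that $h$ is invertible; positive-definiteness of $h$ also guarantees $S \cap \Ker b_E = (\Ker b_1 \cap \Ker b_2) = 0$, so that $p$ restricts to an isomorphism $S^\perp \xrightarrow{\sim} Q$ and the lift is well-defined. One subtlety worth flagging is that $b_1, b_2$ need not be non-degenerate, so their Chern connections, and hence $\sigma$, are not unique; but the ambiguity lies in $\Ker b_Q$ because $\snd$ admits an adjoint, so the pairing $b_Q(\sigma\,\cdot, \ov{\sigma\,\cdot})$, and therefore the whole formula, is independent of the choices.
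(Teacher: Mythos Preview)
Your proposal is correct and follows essentially the same route as the paper: apply the Codazzi--Griffiths equation to the diagonal embedding $E \hookrightarrow (E\oplus E, b_1\oplus b_2)$, identify the second fundamental form as $D_{b_1}-D_{b_2}$, and compute the quotient form via the lift $p^\dagger u = (h^{-1}b_2 u,\,-h^{-1}b_1 u)$. The only cosmetic difference is that the paper obtains $p^\dagger$ by first computing $j^\dagger$ and then using $\id = jj^\dagger + p^\dagger p$, whereas you solve for $S^\perp$ directly; the algebra is the same.
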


\begin{proof}
We have a short exact sequence
\[
  0 \longrightarrow E \longrightarrow E \oplus E \longrightarrow E \longrightarrow 0,
\]
where the inclusion is $j(s) = s \oplus s$, the quotient map is $\pi(s \oplus t) = s - t$, and the Hermitian metrics on the bundles are, in order, $h_{1} + h_{2}$, $h_{1} \oplus h_{2}$, and the ``quotient'' metric.
The Chern connection of the direct sum is $D_{1} \oplus D_{2}$ and its curvature tensor is $R_{1} \oplus R_{2}$.
The second fundamental form is
\[
  \sigma(\alpha) s
  = \pi(D_{1,\alpha} s \oplus D_{2, \alpha} s)
  = D_{1,\alpha} s - D_{2, \alpha} s.
\]
To compute the quotient form we
first  note that the adjoint of $j : E \to E \oplus E$ is defined by
\[
h(j^\dagger( s \oplus t), \ov r)
= h( s \oplus t, \ov{r \oplus r})
= b_1(s, \ov r) + b_2(t, \ov r)
\]
so
\[
j^\dagger(s \oplus t)
= h^{-1}(b_1(s) + b_2(t)).
\]
Then $s \oplus t = jj^\dagger (s\oplus t) + \pi^\dagger \pi(s \oplus t)$, so we find that
\[
\pi^\dagger \pi(s \oplus t)
= \bigl(
s - h^{-1}(b_1(s) + b_2(t))
\bigr) \oplus
\bigl(
t - h^{-1}(b_1(s) + b_2(t))
\bigr).
\]
We write $s = h^{-1}(b_1 + b_2)(s)$ and get
\[
s - h^{-1}(b_1(s) + b_2(t))
= h^{-1}b_2(s - t).
\]
Similarly we get
\[
t - h^{-1}(b_1(s) + b_2(t))
= -h^{-1}b_1(s - t).
\]
Then
\[
\pi^\dagger \pi(s \oplus t)
= \bigl(
h^{-1}b_2(s - t)
\bigr)
\oplus
\bigl(
-h^{-1}b_1(s - t)
\bigr).
\]
In particular, the adjoint of $\pi$ is
\[
\pi^\dagger(s)
=\bigl(
h^{-1}b_2(s)
\bigr)
\oplus
\bigl(
-h^{-1}b_1(s)
\bigr).
\]
The inner product on the quotient is the pullback of $h$ by $\pi^\dagger$, which completes the calculation.
\end{proof}

\section{Positive holomorphic sectional curvature}
\label{sec:grassmannian-bundles}

Our objective in this section is to prove the following technical theorem and apply it to things relevant to our interests.
This theorem was first proved by Chaturvedi and Heier~\cite[Theorem~1.1]{chaturvedi2020hermitian} by classical calculations in local coordinates.
We hope our (mostly) coordinate-free proof is also of interest.

\begin{theo}
\label{thm:holomorphic-sectional-positive}
Let $\pi : X \to (B,h_{B})$ be a holomorphic fibration over a compact Hermitian manifold.
Suppose $h_{B}$ has positive holomorphic sectional curvature.
Suppose there exists a Hermitian form $h_{X/B}$ on $X$ whose restriction to each fiber is a Hermitian metric that has positive holomorphic sectional curvature.
Then there exists a Hermitian metric on $X$ that has positive holomorphic sectional curvature, and if $h_{B}$ is K\"ahler and the K\"ahler form of $h_{X/B}$ is closed then this metric is also K\"ahler.
\end{theo}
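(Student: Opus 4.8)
The plan is to put on $X$ the metric $h_X = h_{X/B} + \lambda\,\pi^{*}h_{B}$ for a large real constant $\lambda$, and to compute its holomorphic sectional curvature from the Codazzi--Griffiths equations applied to
\[
0 \to T_{X/B} \to T_X \to \pi^{*}T_{B} \to 0.
\]
First I would verify that $h_X$ is a genuine Hermitian metric once $\lambda \gg 0$: it restricts to the fibrewise metric $h_{X/B}$ on $T_{X/B}$, where that form is positive-definite, while $\pi^{*}h_{B}$ is positive semi-definite with kernel exactly $T_{X/B}$; since $X$ is compact (the fibration being proper) a single $\lambda$ works over the whole unit sphere bundle. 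For this metric the induced form $b_{S}$ on $T_{X/B}$ is $h_{X/B}$ restricted to the fibres, so its curvature $R_{S}$ is the curvature of the fibre metric and has positive holomorphic sectional curvature by hypothesis; moreover $b_{S}$, and hence $R_{S}$, is independent of $\lambda$. The induced form on $Q = \pi^{*}T_{B}$ is $b_{Q} = \lambda\,\pi^{*}h_{B} + O(1)$, so $R_{Q} = \lambda\,\pi^{*}R_{h_{B}} + O(1)$ carries the positivity of the base weighted by $\lambda$.

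Next I would read off the holomorphic sectional curvature from the Codazzi--Griffiths corollary. For a vertical vector $v \in T_{X/B}$ the first equation gives
\[
R_X(v,\ov v, v, \ov v) = R_{S}(v,\ov v, v, \ov v) + b_{Q}\bigl(\sigma(v,v), \ov{\sigma(v,v)}\bigr) \ge R_{S}(v,\ov v, v, \ov v),
\]
so the second fundamental form only \emph{helps}, and fibrewise positivity gives a uniform positive lower bound independent of $\lambda$. For a horizontal vector $u \in (T_{X/B})^{\perp}$, using $q^{\dagger}qu \sim u$, the second equation gives
\[
R_X(u,\ov u, u, \ov u) = R_{Q}(u,\ov u, qu, \ov{qu}) - b_{S}\bigl(\sigma^{\dagger}(\ov u, qu), \ov{\sigma^{\dagger}(\ov u, qu)}\bigr),
\]
where the defect now \emph{hurts}. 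Here the scaling pays off: on $h_X$-unit horizontal vectors the base term is of size $\lambda^{-1}$ (a factor $\lambda$ from $b_{Q}$ against the quartic base curvature evaluated on the shrinking projection $\pi_{*}u$, of $h_{B}$-length $\lambda^{-1/2}$), while $\sigma = O(\lambda^{-1})$ forces $\sigma^{\dagger} = O(1)$ and hence a defect of size $\lambda^{-2}$; so for $\lambda$ large the base curvature dominates and horizontal directions acquire positive curvature.

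The step I expect to be the main obstacle is the general vector $\xi = v + u$. I would expand $R_X(\xi,\ov\xi,\xi,\ov\xi)$ using the block form of $D_{E}^{2}$ from the proposition preceding the Codazzi--Griffiths equations, whose off-diagonal entries $\bar\partial\sigma$ and $-D'_{\Hom(Q,S)}\sigma^{\dagger}$ generate the vertical--horizontal cross terms. These I would bound by a Cauchy--Schwarz/Young inequality against the two pure terms and absorb into the strictly positive vertical part and the $\lambda$-enhanced horizontal part, which amounts to a discriminant estimate for the resulting quartic in $(v,u)$. The delicate point is uniformity: every constant, as well as the $\lambda$-dependence of $\sigma$ and of the induced connections, must be controlled simultaneously at every point and in every direction, and it is compactness of $X$ that lets me fix one $\lambda$ making $R_X(\xi,\ov\xi,\xi,\ov\xi) > 0$ for all $\xi \ne 0$.

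Finally, the Kähler claim is immediate from the shape of the construction: the fundamental form of $h_X$ is $\omega_X = \omega_{X/B} + \lambda\,\pi^{*}\omega_{B}$, and since $d$ commutes with $\pi^{*}$ we get $d\omega_X = d\omega_{X/B} + \lambda\,\pi^{*}d\omega_{B}$, which vanishes as soon as $h_{B}$ is Kähler and the Kähler form of $h_{X/B}$ is closed.
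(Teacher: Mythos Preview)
Your outline is sound but takes a different route from the paper. You apply the classical Codazzi--Griffiths equations to the tangent sequence $0 \to T_{X/B} \to T_X \to \pi^*T_B \to 0$ with the honest metric $h_X$; the paper instead invokes its ``sum of two degenerate forms'' corollary (the $E \to E\oplus E \to E$ trick) with $b_1 = h_{X/B}$ and $b_2 = e^\lambda\pi^*h_B$, obtaining the single formula $R_\lambda = R_{X/B} + e^\lambda\pi^*R_B - \sigma^*q_\lambda$, where $\sigma = D_{h_{X/B}} - D_{\pi^*h_B}$ is a difference of degenerate Chern connections and is \emph{independent of $\lambda$}. The paper then avoids any vertical/horizontal/mixed case split: it proves that the quotient form $q_\lambda$ has a limit $q_\infty$ as $\lambda\to\infty$ and runs a single compactness argument on the sphere bundle of an auxiliary metric. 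What your approach buys is that it uses only the positive-definite Codazzi--Griffiths equations, so none of the paper's degenerate-form machinery is strictly needed for this theorem, and your vertical-direction inequality $R_X(v,\bar v,v,\bar v) \ge R_S(v,\bar v,v,\bar v)$ is cleaner than the paper's normal-frame argument for the same step. What you lose is that your second fundamental form, your quotient metric $b_Q$, and your horizontal subspace all depend on $\lambda$, so the mixed-direction discriminant estimate you flag as the main obstacle requires simultaneous tracking of several $\lambda$-dependent objects; this is essentially the classical argument (Cheung, \'Alvarez--Heier--Zheng, Chaturvedi--Heier) that the paper set out to replace.
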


\begin{proof}
We consider the Hermitian forms $h_{\lambda} = h_{X/B} + e^{\lambda} \pi^{*} h_{B}$ for real $\lambda$.
First off, it's clear that if $h_{B}$ is K\"ahler and the K\"ahler form of $h_{X/B}$ is closed, then $h_{\lambda}$ is K\"ahler.

As the base $B$ is compact, $h_{X/B}$ is positive on $T_{X/B}$, and $\pi^{*}h_{B}$ is positive on $\pi^{*}T_{B}$, we see that $h_{\lambda}$ is an honest Hermitian metric for all $\lambda \gg 0$.
Its curvature tensor is then
\[
  R_{\lambda}
  = R_{X/B} + e^{\lambda}\pi^{*}R_{B} - \sigma^{*}q_{\lambda}.
\]
We need to investigate what happens first in $T_{X/B}$ and then the rest of $T_{X}$.
The calculations can get a little messy, so we split them into a few technical lemmas.
The first one will help us say what happens on $T_{X/B}$, as the form $\pi^{*}h_{B}$ is zero there.

\begin{lemm}
Let $b_{1}$ and $b_{2}$ be Hermitian forms on a vector space $V$ such that $h := b_{1} + b_{2}$ is positive-definite.
Let $q$ be the associated ``quotient'' form.
Then $\Ker b_{j} \subset \Ker q$ for $j = 1,2$.
\end{lemm}

\begin{proof}
By symmetry it's enough to prove the statement for $j = 1$.
Recall that
\[
  q = (h^{-1}b_{2})^{*}b_{1} + (h^{-1}b_{1})^{*}b_{2}.
\]
We have $x = h^{-1}h(x) = h^{-1}b_{1}(x) + h^{-1}b_{2}(x)$, so if $x \in \Ker b_{1}$ then $x = h^{-1}b_{2}(x)$.
But then
\begin{equation*}
q(x) = (h^{-1}b_{2})^{*}b_{1}(x) = b_{1}(x) = 0.
\qedhere
\end{equation*}
\end{proof}

We can now deal with $T_{X/B}$, where $\pi^*R_B = 0$.
Let $(x,b)$ be a point in $X$, so that $x \in X_b$.
There exists a local normal frame of $T_{X_b}$ for $h_{X/B,b}$ centered at $x$.
In this frame, we have $D_{X/B} - \pi^* D_B = 0$ at $x$.
The only part of our curvature tensor that survives at $x$ is thus
$R_\lambda = R_{X/B}$,
which is positive, so $R_\lambda$ is positive on $T_{X/B}$.

Next we'll consider what happens on the rest of $T_{X}$.
As we don't know what $R_{X/B}$ does there, we have to increase the contribution of $\pi^{*}R_{B}$ and try to control the ``quotient'' form.
For that we need to know what happens to it as $\lambda$ grows.

\begin{lemm}
Let $0 \to S \to V \stackrel{\pi}{\to} Q \to 0$ be a short exact sequence of vector spaces.
Let $b_{1}$ be a Hermitian form on $V$ that is positive-definite on $S$ and $b_{2}$ be a Hermitian inner product on $Q$ such that $h_{\lambda} := b_{1} + e^{\lambda} \pi^{*} b_{2}$ is positive-definite for all $\lambda \geq 0$.
Let $q$ be the associated ``quotient'' form.
Then there exists a semipositive Hermitian form $q_{\infty}$ on $V$ that is positive-definite on $S$ such that $h_{\lambda} \to h_{\infty}$ as $\lambda \to \infty$.
\end{lemm}

\begin{proof}
As $h_{0}$ is positive-definite we can simultaneously diagonalize it and $b_{1}$.
Then $\pi^{*}b_{2} = h_{0} - b_{1}$ is also diagonal.
Denote the coefficients of $b_{1}$ by $x_{j}$ and the ones of $\pi^{*}b_{2}$ by $y_{j}$, then the coefficients of $h_{\lambda}$ are
\[
  \frac{e^{\lambda}x_{j} y_{j}}{x_{j} + e^{\lambda} y_{j}}
  = \frac{x_{j} y_{j}}{x_{j}e^{-\lambda} + y_{j}}
  \to
  \begin{cases}
    x_{j} & y_{j} \not= 0
    \\
    0 & y_{j} = 0
  \end{cases}
\]
as $\lambda \to \infty$.
The form this defines is semipositive on $V$ and positive-definite on~$S$.%
\end{proof}

The limit form is $(j j^{\dagger})^{*}b_{1}$, where $j^{\dagger}$ is the adjoint of $j$ with respect to $h$. There's probably an invariant proof of this somewhere, but we don't need it.

Now pick an auxiliary Hermitian metric on $X$ and define a compact sphere bundle
$S(T_X) \subset T_X$ to test positivity on.
Let
\begin{align*}
m_\lambda &=
\inf_{S(T_X)} R_{X/B}
- (D_{X/B} - \pi^* D_{B})^* q_{\lambda},
\\
m_\infty &=
\inf_{S(T_X)} R_{X/B}
- (D_{X/B} - \pi^* D_{B})^* q_{\infty}
\end{align*}
As $S(T_X)$ is compact, the infimum $m_\infty$ is attained at a point $v$.
If $v \in \Ker \pi^*$, then we're done as we know that $R_{X/B}$ is positive
there.
Otherwise $\pi^*(v) \not= 0$, so $m_2 := \pi^*R_B(v, \bar v, v, \bar v) > 0$.
We then pick $\lambda_0$ big enough so that $m_\infty + e^{\lambda_0} m_2 > 0$.
Now note that
$m_\lambda \to m_\infty$ as $\lambda \to \infty$ because $q_{\lambda} \to
q_{\infty}$.
For $\lambda > \lambda_0$ we then have
$$
m_\lambda + e^\lambda m_2
\geq m_\lambda + e^{\lambda_0} m_2
\to m_\infty  + e^{\lambda_0} m_2
> 0.
$$
For all large enough $\lambda$, it follows that $R_\lambda$ will be positive.
\end{proof}

As a corollaries, we get a family of theorems where a holomorphic family whose fibers have positive holomorphic sectional curvature also has positive holomorphic sectional curvature as soon as we can pick the metrics on the fibers to vary smoothly between them.
The trick is to find conditions that guarantee we can extend the fiberwise metrics to a single smooth Hermitian form.
It is tempting to guess that if the metrics on the fibers are K\"ahler then we also get a K\"ahler metric on the total space, but this is delicate.
Given a family of smoothly varying K\"ahler metrics on each fiber, there does not need to exist a closed Hermitian form on the total space that restricts to each of them.
Remember for example the Hopf surface, which is the total space of a fibration of projective lines over an elliptic curve; picking any Hermitian metric on the total space we get a family of K\"ahler metrics on the fibers, but if they came from a closed form on the total space the Hopf surface would be K\"ahler.

\subsection*{Grassmannian bundles}

Recall that a Grassmannian manifold of $k$-planes in $n$-dimensional space admits a unique K\"ahler--Einstein metric $\omega_{\Gr}$ such that
$\Ric \omega_{\Gr} = n \omega_{\Gr}$.
When we speak of ``the'' K\"ahler--Einstein metric on a Grassmannian, this is the one we mean.
This metric can be obtained in two ways:

\begin{itemize}
  \item It is the curvature form of the Hermitian metric on $\det K_{\Gr}$ induced by the choice of any Hermitian inner product on the underlying vector space $V$. This agrees with the metric obtained by pullback of the Fubini--Study metric by the Pl\"ucker embedding.

  \item It is the pullback of the Hermitian metric on $\Hom(\cc S, \cc Q)$ induced by a Hermitian inner product on the underlying vector space by the (holomorphic) second fundamental form, where $0 \to \cc S \to \underline {V} \to \cc Q \to 0$ is the short exact sequence of tautological sub- and quotient bundles of the trivial bundle with fiber $V$.
\end{itemize}

Either way, the holomorphic sectional curvature $H$ of this metric satisfies
\[
  \frac{2}{k^{2}} \leq H \leq 2
\]
and is thus positive.
It's easiest to calculate its holomorphic sectional curvature by going the $\Hom(\cc S, \cc Q)$ route, in case the reader wants to try it out.

We can now slightly extend the results of \'Alvarez and her collaborators on projectivized vector bundles by not requiring our Grassmannian bundle to come from a vector bundle.

\begin{prop}[relative Pl\"ucker embedding]
Let $\pi : X \to B$ be a holomorphic fibration over a connected base whose fibers are Grassmannian manifolds.
Then there exists a holomorphic embedding $j : X \to Y$ of holomorphic fibrations
\[
\begin{tikzcd}
  X \ar[d,"\pi"] \ar[r,"j"] & Y \ar[d,"\hat\pi"]
  \\
  B \ar[r,"\id"] & B,
\end{tikzcd}
\]
where the fibers of $Y \to B$ are projective spaces, and the fiberwise maps $j_{b} : X_{b} \to Y_{b}$ are Pl\"ucker embeddings.
\end{prop}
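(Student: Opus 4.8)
The plan is to build the target $Y$ and the embedding $j$ by gluing together the classical Plücker embedding over a trivializing cover of the base, using that every automorphism of a Grassmannian preserves its Plücker polarization. Write $G = \Gr(k,n)$ for the fiber model, $L = \cc{O}_G(1)$ for the ample generator of $\mathrm{Pic}(G) \cong \kk Z$, and $P = \kk P(H^0(G,L)^*) \cong \kk P(\ext{k}V)$ for the Plücker space, with $\iota : G \hookrightarrow P$ the embedding defined by the complete linear system $|L|$.

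First I would fix the fiber type. Since $B$ is connected, the biholomorphism type of the fibers is locally constant, so all fibers are biholomorphic to a single $G$. The fibration is proper with all fibers biholomorphic, so by the Fischer--Grauert theorem it is locally holomorphically trivial: there is an open cover $\{U_i\}$ of $B$ with biholomorphisms $X|_{U_i} \cong U_i \times G$ over $U_i$. Over each $U_i$ I set $Y|_{U_i} := U_i \times P$ and $j_i := \id \times \iota$, an embedding over $U_i$ that is fiberwise the Plücker embedding.

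The heart of the argument is an equivariance statement, which I expect to be the main obstacle. For any $\phi \in \Aut(G)$ one has $\phi^* L \cong L$: indeed $\phi^*$ permutes $\mathrm{Pic}(G) \cong \kk Z$ preserving ampleness, so it fixes the ample generator. (This is exactly what makes the case $n = 2k$, where $\Aut(G)$ acquires the duality component, cause no trouble.) Consequently $\phi$ acts on $H^0(G,L)$, inducing a projective-linear map $\Phi(\phi) \in \mathrm{PGL}(H^0(G,L)^*) = \Aut(P)$, and functoriality of the embedding by a complete linear system gives the intertwining relation $\iota \circ \phi = \Phi(\phi) \circ \iota$. The resulting homomorphism $\Phi : \Aut(G) \to \Aut(P)$ is algebraic, hence carries holomorphic families to holomorphic families.

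With this in hand the gluing is formal. The transitions of the holomorphic fibration form a holomorphic cocycle $g_{ij} : U_{ij} \to \Aut(G)$; applying $\Phi$ yields a holomorphic cocycle $\Phi(g_{ij}) : U_{ij} \to \mathrm{PGL}(\ext{k}V) = \Aut(P)$, which glues the pieces $U_i \times P$ into a holomorphic fibration $\hat\pi : Y \to B$ with projective-space fibers. The intertwining relation $\iota \circ g_{ij} = \Phi(g_{ij}) \circ \iota$ shows the local embeddings $j_i$ agree on overlaps, so they glue to a global holomorphic embedding $j : X \to Y$ over $B$ whose fiberwise restriction is $\iota$, the Plücker embedding. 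The one point demanding care is the equivariance step above, where one must know both that $\mathrm{Pic}(G) \cong \kk Z$ (so the polarization is canonical) and that automorphisms act holomorphically on the space of Plücker sections; everything else is bookkeeping over the cover.
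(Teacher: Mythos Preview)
Your proof is correct and follows the same architecture as the paper: trivialize locally via Fischer--Grauert, apply the Pl\"ucker embedding on each chart, and glue by showing every automorphism of the Grassmannian extends to one of the ambient projective space. The only difference is in how that extension is justified: the paper quotes a result of Cowen that $\Aut(\Gr(k,n))$ is precisely the subgroup of $\Aut(\kk P^N)$ preserving decomposable $k$-vectors, whereas you argue intrinsically via $\mathrm{Pic}(G) \cong \kk Z$ to see that any automorphism fixes the polarization and hence acts projective-linearly on Pl\"ucker sections. Your route is more self-contained and makes clear why the duality component at $n = 2k$ causes no trouble; the paper's citation gives the extension in one stroke but hides the mechanism. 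Either way the gluing step is identical bookkeeping.
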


\begin{proof}
Let $(U_{\nu})$ be a covering of $B$ by connected and simply connected charts.
Then there are local holomorphic isomorphisms $\theta_{\nu} : X_{|U_{\nu}} \to U_{\nu} \times \Gr(k)$, where $\Gr(k)$ is the Grassmannian of $k$-planes in $n$-dimensional space~\cite{fischer1965lokal}.
We define
\[
  j_{\nu} : U_{\nu} \times \Gr(k) \to U_{\nu} \times \kk P^{N},
  \quad
  (x, \Span(z_{1}, \ldots, z_{k}))
  \mapsto
  (x, [z_{1} \wedge \cdots \wedge z_{k}]),
\]
where $N = \binom nk$,
to be the usual Pl\"ucker embedding.

The coordinate change maps $\theta_{\mu\nu} = \theta_{\mu}\theta_{\nu}^{-1}$ can be written as
\[
\theta_{\mu\nu} = \id \times f_{\mu\nu},
\]
where $f_{\mu\nu}$ is an automorphism of $\Gr(k)$.
But an automorphism of $\Gr(k)$ is exactly an automorphism of $\kk P^{N}$ that preserves decomposable $k$-vectors~\cite{cowen1989automorphisms}, so these induce automorphisms $\hat f_{\mu\nu}$ of $\kk P^{N}$. We define the holomorphic family $Y \to B$ by the products $U_{\nu} \times \kk P^{N}$ and the gluing maps $\id \times \hat f_{\mu\nu}$.

By construction and the shape of automorphisms of the Grassmannian, it follows that the maps $j_{\nu}$ glue together to define an embedding $j : X \to Y$.
\end{proof}

Above we only really used that the fibers of $X$ embed in a manifold $V$ and the automorphism group of the fibers is a subgroup of the automorphism group of $V$, in case the reader has a favorite set of fibrations they'd like to try this out on.
We'll call the embedding $j : X \to Y$ above the relative Pl\"ucker embedding.

\begin{prop}
Let $\pi : X \to B$ be a holomorphic fibration over a connected base whose fibers are Grassmannian manifolds.
If $B$ admits a metric with positive holomorphic sectional curvature, then so does $X$.
This metric is K\"ahler if the metric on $B$ is K\"ahler.
\end{prop}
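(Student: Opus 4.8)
The plan is to reduce everything to Theorem~\ref{thm:holomorphic-sectional-positive}. As that theorem requires a compact base, I take $B$ to be compact; the hypothesis on $h_B$ is then already in hand, so the whole problem comes down to producing a single smooth Hermitian form $h_{X/B}$ on $X$ whose restriction to each fiber $X_b$ is a metric with positive holomorphic sectional curvature. The natural fiberwise candidate is the Kähler--Einstein metric $\omega_{\Gr}$, which satisfies $2/k^2 \le H \le 2$ and is therefore positive. First I would take the local trivializations $\theta_\nu : X_{|U_\nu} \to U_\nu \times \Gr(k)$ from the proof of the relative Plücker embedding, whose transition maps are $\id \times f_{\mu\nu}$ with $f_{\mu\nu} \in \Aut(\Gr(k))$, and pull back $\omega_{\Gr}$ on each chart to obtain local relative forms $\theta_\nu^* \omega_{\Gr}$.

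The main obstacle is that these local forms do not glue. An element of $\Aut(\Gr(k))$ is a projective-linear automorphism of the ambient space, and such maps do not preserve $\omega_{\Gr}$; only the isometry group $K$ of $\omega_{\Gr}$, a maximal compact subgroup of $\Aut(\Gr(k))$, does. One cannot repair this by averaging the forms with a partition of unity, since holomorphic sectional curvature is not convex and a convex combination of positive-HSC metrics can fail to be positive. The way around this is to average at the level of the structure group rather than the metrics: because $\Aut(\Gr(k))$ is a Lie group and $\Aut(\Gr(k))/K$ is contractible (a Cartan--Hadamard symmetric space, diffeomorphic to a Euclidean space), any smooth $\Aut(\Gr(k))$-bundle over the paracompact base $B$ reduces to $K$. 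Concretely one chooses a section of the associated bundle with fiber $\Aut(\Gr(k))/K$ by a barycenter construction, which is legitimate because that fiber carries a canonical non-positively curved convex structure. After this reduction the transition maps take values in $K$, they preserve $\omega_{\Gr}$, and the forms $\theta_\nu^*\omega_{\Gr}$ patch to a global smooth relative form $h_{X/B}$ restricting to $\omega_{\Gr}$ on every fiber. Feeding $h_{X/B}$ into Theorem~\ref{thm:holomorphic-sectional-positive} then yields a metric on $X$ with positive holomorphic sectional curvature.

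For the Kähler statement I would realize the same reduction through the relative Plücker line bundle. The $K$-reduction is exactly a smoothly varying choice of Hermitian inner product, up to scale, on the spaces $V_b$ underlying the projective fibers, and such a choice induces a Hermitian metric on the relative hyperplane bundle $j^*\cc{O}_Y(1)$ whose Chern curvature $\Theta$ is a global closed $(1,1)$-form. By the description of $\omega_{\Gr}$ as the Plücker pullback of the Fubini--Study metric, $\Theta$ restricts on each fiber to a multiple of $\omega_{\Gr}$, so taking $h_{X/B} = \Theta$ gives a relative form that is fiberwise positive with positive holomorphic sectional curvature and whose associated form is $d$-closed. When $h_B$ is Kähler, the Kähler clause of Theorem~\ref{thm:holomorphic-sectional-positive} applies and the resulting metric on $X$ is Kähler. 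The one point needing care is that the inner products be chosen on the spaces $V_b$ themselves, so that the induced Fubini--Study metric pulls back to the genuine $\omega_{\Gr}$, rather than on $\bigwedge^k V_b$; this is why the reduction is carried out on $\Aut(\Gr(k)) = \mathrm{PGL}$ and not on the structure group of $Y$.
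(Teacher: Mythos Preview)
Your argument is correct, but it takes a longer road than the paper does. The paper's proof is three lines: invoke the relative Pl\"ucker embedding $j : X \to Y$ just constructed, cite Kodaira's result that any projective-space fibration $Y \to B$ carries a closed Hermitian form whose fiberwise restriction is Fubini--Study, and pull back by $j$ to obtain $h_{X/B}$ restricting to $\omega_{\Gr}$ on each fiber; then apply Theorem~\ref{thm:holomorphic-sectional-positive}. Your structure-group reduction $\Aut(\Gr(k)) \to K$ and the subsequent line-bundle curvature construction are essentially a reconstruction of what Kodaira's theorem already packages, so the paper trades your explicit mechanism for a citation. What your route buys is transparency about \emph{why} the fiberwise metric is the right Pl\"ucker pullback (your remark that one must reduce $\Aut(\Gr(k))$ rather than the ambient $\mathrm{PGL}_{N+1}$ is a genuine point the paper's citation hides); what the paper's route buys is brevity and the avoidance of a technical wrinkle in your version, namely that $\cc O_Y(1)$ need not exist globally on a $\kk P^N$-bundle with $\mathrm{PGL}$ cocycle. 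If you want to keep your argument self-contained, replace $j^*\cc O_Y(1)$ by the relative anticanonical $-K_{X/B}$, which always exists and whose curvature, for the metric induced by your $K$-reduction, restricts to $n\,\omega_{\Gr}$ on each fiber.
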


\begin{proof}
Consider the relative Pl\"ucker embedding $j : X \to Y$ as above.
Kodaira~\cite[Section 4(V)]{kodaira-embedding} showed that there exists a closed Hermitian form on $Y$ whose restriction to any fiber is the Fubini--Study metric.
The pullback of this metric to $X$ gives a form $h_{X/B}$ on $X$ whose restriction to each fiber is the K\"ahler--Einstein metric on the fiber.
Theorem~\ref{thm:holomorphic-sectional-positive} now applies.
\end{proof}

\begin{coro}
Let $\pi : X \to B$ be a holomorphic fibration over a compact K\"ahler manifold whose fibers are flag manifolds.
Suppose $B$ admits a K\"ahler metric with positive holomorphic sectional curvature.
Then there exists a K\"ahler metric on $X$ that has positive holomorphic sectional curvature.
\end{coro}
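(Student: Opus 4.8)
The plan is to present $X \to B$ as a tower of Grassmannian bundles and then invoke the previous proposition one step at a time. Recall that a flag manifold $F$ parametrizing flags $V_{1} \subset \cdots \subset V_{m} \subset \kk C^{n}$ with $\dim V_{i} = k_{i}$ carries a canonical forgetful map $F \to F'$ onto the flag manifold of flags $V_{2} \subset \cdots \subset V_{m}$, whose fiber over such a flag is the Grassmannian $\Gr(k_{1}, V_{2}) \cong \Gr(k_{1}, k_{2})$ of admissible choices of $V_{1}$. Iterating the forgetful maps exhibits $F$ as an iterated Grassmannian bundle $F = F_{m} \to F_{m-1} \to \cdots \to F_{1} = \Gr(k_{m}, n) \to F_{0} = \mathrm{pt}$, each arrow forgetting the smallest remaining subspace and having Grassmannian fibers.

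First I would globalize this tower over $B$. Exactly as for the relative Pl\"ucker embedding, Fischer--Grauert rigidity of flag manifolds gives a cover of $B$ by simply connected charts $U_{\nu}$ over which $X$ is a product $U_{\nu} \times F$, with transition maps valued in $\Aut(F)$. The forgetful maps are equivariant for the identity component $\Aut^{0}(F) = \mathrm{PGL}_{n}$, so they are defined on each trivialization and, once their compatibility on overlaps is checked, glue to a tower of holomorphic fibrations $X = X_{m} \to X_{m-1} \to \cdots \to X_{1} \to X_{0} = B$ over $B$ in which every arrow $X_{i} \to X_{i-1}$ has Grassmannian fibers. Each $X_{i}$ is compact and connected, being the total space of a fibration with compact connected fibers over the compact connected $X_{i-1}$.

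With the tower in hand the result follows by induction up the tower. The base $X_{0} = B$ is compact K\"ahler with positive holomorphic sectional curvature by hypothesis. If $X_{i-1}$ carries a K\"ahler metric of positive holomorphic sectional curvature, then the Grassmannian bundle proposition, applied to the fibration $X_{i} \to X_{i-1}$, produces such a metric on $X_{i}$, again K\"ahler because the metric on the base is. All the curvature and closedness input is thus supplied by that proposition as a black box. After $m$ steps we arrive at the desired K\"ahler metric of positive holomorphic sectional curvature on $X = X_{m}$, and the corollary follows.

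I expect the globalization of the tower to be the one genuinely delicate point. The forgetful maps glue only if the transition automorphisms of the flag fibration preserve a fixed tower structure: the identity component $\mathrm{PGL}_{n}$ always does, but the component group of $\Aut(F)$---in type $A$ the duality automorphism, which interchanges forgetting the smallest and the largest subspace and occurs only for self-dual flag types---could in principle permute the forgetful projections along loops in $B$. The plan is to control this finite monodromy over the connected base $B$, choosing the tower so that each successive forgetful projection is fixed by it (reducing, where necessary, the structure group to $\Aut^{0}(F)$), so that a compatible chain of relative Grassmannian bundles always exists.
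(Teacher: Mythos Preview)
Your approach matches the paper's: both factor $X \to B$ as a tower of Grassmannian fibrations via the forgetful maps on flags and then apply the Grassmannian-bundle proposition inductively up the tower. The paper is terser, handling the globalization step with a bare ``similar to before'' and not raising the finite-monodromy subtlety you flag in your final paragraph.
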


\begin{proof}
Denote the fibers by $P(d_{0}, \ldots, d_{k})$.
The forgetful map $P(d_{0}, \ldots, d_{k}) \to P(d_{0}, \ldots, \hat{d_{j}}, \ldots, d_{k})$ is a holomorphic fibration whose fiber is a Grassmannian.
Similar to before, this extends to a morphism $f$ of holomorphic fibrations
\[
\begin{tikzcd}
  X \ar[r,"f"]\ar[d] & X_{1} \ar[d]
  \\
  B \ar[r] & B
\end{tikzcd}
\]
whose fibers are Grassmannians.
Continuing this process we end up with a sequence $X \to X_{1} \to \cdots \to X_{k} \to B$ of fibrations with Grassmannian fibers, and Theorem~\ref{thm:holomorphic-sectional-positive} applies to each step in turn.
\end{proof}

\bibliographystyle{plain}
\bibliography{main}

\end{document}